\newtheorem{theorem}{Theorem}[section]
\newtheorem{lemma}[theorem]{Lemma}
\newtheorem{corollary}[theorem]{Corollary}
\theoremstyle{definition}
\newtheorem{definition}[theorem]{Definition}
\newtheorem{example}[theorem]{Example}
\DeclareMathOperator{\McN}{\mathscr M}
\DeclareMathOperator{\conv}{{\rm conv}}
\DeclareMathOperator{\den}{{\rm den}}
\DeclareMathOperator{\ver}{{\rm vert}}
\DeclareMathOperator{\Zed}{\mathbb{Z}}
\newcommand{\R}{\mathbb{R}}
\newcommand{\Q}{\mathbb{Q}}
\newcommand{\cube}{[0,1]^{\it n}}
\title[Projective unital $\ell$-groups]{Rational Simplicial geometry and projective unital
lattice-ordered abelian groups}
\author[L.M.Cabrer]{Leonardo Manuel Cabrer}
\address{Department
 of Computer Science, Statistics and Applications ``Giuseppe Parenti'',
University of Florence,
Viale Morgagni 59 --
I-50134, Florence -
Italy}
\email{l.cabrer@disia.unifi.it}
\thanks{This research was supported by a Marie Curie Intra European Fellowship within the 7th European Community Framework Program (ref. 299401-FP7-PEOPLE-2011-IEF)}
\keywords{
Lattice-ordered abelian group, 
strong unit,
projective,
rational polyhedron, 
retract,  
contractibility, 
collapsibility,
strong regularity.}
\subjclass[2010]{Primary:  
 06F20, % Ordered Abelian Groups, Riez Groups
55U10. %Simplicial sets and complexes
Secondary:   
08B30, % Injectives, projectives
 52B20, % Lattice polytopes (including relations with commutative algebra and algebraic geometry)
06D35. %MV-algebras 
}
\date{\today}
\begin{document}

\begin{abstract}
A {\it unital $\ell$-group} is an abelian group equipped
 with a translation invariant lattice-order 
and with a distinguished {\it strong unit}, 
i.e. an element whose positive
integer multiples eventually dominate every element of $G$.
If $X$ is a compact subset  of $\R^n$, 
the set $\McN(X)$ of real-valued piecewise linear maps with integer 
coefficients, 
whose addition and lattice operations defined pointwise and whose distinguished element is the constant map $1$, 
is a  unital $\ell$-group.

In this paper we provide a geometric decription of finitely generated projective unital $\ell$-groups. 
We prove that a finitely unital $\ell$-group is  projective 
if and only if it is isomorphic to $\McN(P)$ for some polyhedron $P$ which is rational,
contractible, contains an integer point, and satisfies an elementary
arithmetical-topological property.
\end{abstract}

\maketitle

%%%%%%%%%%%%%%%%%%%%%%%%%%%%%%%%%%%
\section{Introduction}
%%%%%%%%%%%%%%%%%%%%%%%%%%%%%%%%%%%%

An {\it $\ell$-group} $G$ is an abelian group equipped with
a 
translation invariant lattice-order.
A (unital) $\ell$-group $(G,u)$ is {\it (regular) projective} 
if whenever $\psi\colon (A,a)\to(B,b)$ 
is a surjective (unital) $\ell$-homomorphism 
and $\phi\colon (G,u)\to(B,b)$ is a (unital) $\ell$-homomorphism 
(i.e. preserve the lattice and group structure and the unit), there
is a unital $\ell$-homomorphism $\theta\colon (G,u)\to(A,a)$ 
such that $\phi= \psi \circ \theta$.
Baker \cite{Bak1968} and Beynon \cite{Bey1977a}
proved that an $\ell$-group $G$
 is finitely generated projective 
if and only  it is finitely presented.
The characterisation of projective {\it unital} $\ell$-groups 
is far more delicate.

Given a rational polyhedron $P\subseteq[0,1]^n$ (see Section~\ref{Sec:RatPol}), 
we let \ $\McN(P)$ denote the set of all continuous functions
$f\colon P\to \R$ 
having the following property: there are finitely many linear
(in the affine sense) polynomials $p_{1},\ldots,p_{m}$ with integer coefficients, 
such that
for all $x\in [0,1]^{n}$ there is $i\in \{1,\ldots,m\}$ with
$f(x)=p_{i}(x)$.
The set $\McN(P)$ is the universe of 
a subalgebra of the unital $\ell$-group  $(\R,1)^{P}$.
Therefore,  $\McN(P)$ carries a natural structure of 
unital $\ell$-group, 
where the lattice operations and the group operations 
are defined pointwise from 
$\R$ and the strong unit is the constant map 
$v\in P\mapsto1\in\R$.

In \cite{CM2013} the author and D. Mundici presented a 
characterisation of projective unital $\ell$-groups 
as those isomorphic to  $\McN(P)$  
where the rational polyhedron $P\subseteq \cube$ 
is a special kind of retract of $\cube$ (for some $n=1,2,\ldots$)
called $\Zed$-retract. Since $\Zed$-retract are continuous 
retractions of cubes, they are trivially {\it contractible}, that is,  
homotopically equivalent to a point
 (see for example \cite[Chapter 0]{Hat2001})).
Another important property of $\Zed$-retracts is that they are 
{\it strongly regular} (see \cite[Definition~3.1]{CM2013} and subsection~\ref{SubSec:RegularTriang}). 
In \cite[Theorem~4.17]{Cab201X}, we observe how
strong regularity is connected with the notion of anchored 
polytopes defined by  Je\v{r}\'abek in \cite{Je2010}.
In Lemma~\ref{Lem:SRandConvexSet} we prove that a rational polyhedron $P$ is strongly regular if and only if for each $v\in P\cap \Q^n$ there exist $w\in \Zed^n$ and $\varepsilon>0$ such that the convex segment $\conv(v,v+\varepsilon (w-v))$ is contained in~$P$.
Using this result an equivalent presentation of \cite[Theorem~4.5]{CM2013} is as follows.

\begin{theorem}
\label{Thm:ToInvert}
If the unital $\ell$-group  $(G,u)$ 
is finitely generated and projective, 
then there exist $n\in\{1,2,\ldots\}$ 
and a rational  polyhedron  $P\subseteq \cube$ 
such that $(G,u)\cong\McN(P)$ and $P$ 
satisfies the following conditions:
\begin{itemize}
\item[(i)]
$P$ is contractible,

\item[(ii)] $P\cap\{0,1\}^n\neq\emptyset$, and

\item[(iii)]
for each $v\in P\cap \Q^n$ there exist $w\in \Zed^n$ and 
$\varepsilon>0$ such that the convex segment 
$\conv(v,v+\varepsilon (w-v))$ is contained in~$P$.
\end{itemize}
  \end{theorem}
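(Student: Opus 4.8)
The plan is to reduce Theorem~\ref{Thm:ToInvert} to the characterization of projective unital $\ell$-groups already established in \cite[Theorem~4.5]{CM2013}, which says that $(G,u)$ projective and finitely generated implies $(G,u)\cong\McN(P)$ for some $\Zed$-retract $P\subseteq\cube$. Granting this, the entire task is to translate the three stated conditions into properties that are known to hold for $\Zed$-retracts. First I would fix such a representing polyhedron $P$; conditions (i), (ii), (iii) must then be verified in turn using the structural facts recalled in the introduction.

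For condition (i), I would invoke the observation that $\Zed$-retracts are continuous retractions of cubes: a retract of a contractible space is contractible, and $\cube$ is convex hence contractible, so $P$ inherits contractibility immediately (cf.\ \cite[Chapter 0]{Hat2001}). For condition (iii), the key is that $\Zed$-retracts are \emph{strongly regular}, together with Lemma~\ref{Lem:SRandConvexSet}, which provides exactly the asserted equivalence: a rational polyhedron is strongly regular if and only if for every rational $v\in P$ there exist $w\in\Zed^n$ and $\varepsilon>0$ with $\conv(v,v+\varepsilon(w-v))\subseteq P$. Thus (iii) is nothing more than the restatement of strong regularity furnished by that lemma, and its verification is a direct citation once strong regularity of $\Zed$-retracts is in hand.

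The step requiring the most care is condition (ii), the requirement that $P$ meet an integer vertex of the cube, i.e.\ $P\cap\{0,1\}^n\neq\emptyset$. This does not follow from contractibility or strong regularity alone, so I expect it to be the main obstacle. My plan here is to exploit the retraction $\rho\colon\cube\to P$ associated with the $\Zed$-retract structure: since $\rho$ is a $\Zed$-map (a piecewise linear map with integer coefficients fixing $P$ pointwise), I would track the image of a suitable vertex of the cube and argue, using the integrality of the linear pieces of $\rho$ together with the presence of a rational (indeed integer) fixed point guaranteed by strong regularity, that $P$ must contain a point of $\{0,1\}^n$. If a direct vertex argument is awkward, the fallback is to adjust $n$ and the affine-unimodular embedding so that the representing polyhedron is placed to contain such a point; because the conclusion only asserts existence of \emph{some} $n$ and \emph{some} $P$, one retains the freedom to re-embed $P$ by a $\Zed$-homeomorphism and thereby normalize the position of an integer point without disturbing (i) and (iii), which are invariant under such maps.

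Finally I would check that all three conditions survive simultaneously under whatever normalization is used, and that the isomorphism $(G,u)\cong\McN(P)$ is preserved, so that the single polyhedron $P$ witnesses the statement. The overall structure is therefore: cite \cite[Theorem~4.5]{CM2013} to obtain a $\Zed$-retract representative, derive (i) from contractibility of retracts of cubes, derive (iii) from strong regularity via Lemma~\ref{Lem:SRandConvexSet}, and establish (ii) by a dedicated integrality argument on the retraction map, re-embedding if necessary.
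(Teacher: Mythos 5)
Your proposal is correct, but it decomposes the work differently from the paper. The paper's proof is essentially a citation: \cite[Theorem~4.5]{CM2013} already produces a polyhedron $P$ that is contractible, meets $\{0,1\}^n$, and has a strongly regular triangulation, so the only remaining step is to replace strong regularity by condition (iii) via Lemma~\ref{Lem:SRandConvexSet}. What you cite as \cite[Theorem~4.5]{CM2013} is in fact the $\Zed$-retract characterisation, i.e.\ \cite[Theorem~4.1]{CM2013} (Theorem~\ref{Thm_Projective_Retraction} in this paper); starting from there you must re-derive all three conditions, which is what you do: (i) because a retract of the contractible cube is contractible, (iii) because $\Zed$-retracts are strongly regular combined with Lemma~\ref{Lem:SRandConvexSet} --- both exactly the observations the paper makes --- and (ii) by a dedicated integrality argument. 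That argument is sound and is in fact a one-liner requiring no ``suitable'' choice of vertex and no re-embedding fallback: every linear piece of the retraction $\rho\colon\cube\to P$ has integer coefficients, so $\rho$ maps $\Zed^n$ into $\Zed^n$; hence for \emph{any} $u\in\{0,1\}^n$ one gets $\rho(u)\in P\cap\Zed^n\subseteq[0,1]^n\cap\Zed^n=\{0,1\}^n$. Your route buys a self-contained deduction of (ii) from the retract structure, at the cost of re-proving part of what \cite[Theorem~4.5]{CM2013} already packages.

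One assertion should be deleted: you claim that strong regularity guarantees ``a rational (indeed integer) fixed point'' in $P$. That is false. Strong regularity only requires the vertex denominators of each maximal simplex to have greatest common divisor $1$; for instance $\conv\bigl((1/2,0),(1/3,1/3)\bigr)$ is strongly regular yet contains no integer point --- which is precisely why condition (ii) is independent of (i) and (iii) and needs its own argument. Your proof survives because the integrality of the linear pieces of $\rho$ alone does the job, but the appeal to strong regularity at that point is both incorrect and unnecessary.
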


This paper is devoted to proving the converse of 
Theorem~\ref{Thm:ToInvert}. 
In the special case when the rational polyhedron $P$
 is a finite union of $1$-simplexes, 
the converse was already proved in \cite[Corollary~5.4]{CM2013}, 
but the general case has remained open until now. 

In \cite{Mu1986}, Mundici proved that unital $\ell$-groups are 
categorically equivalent to the variety of MV-algebras 
(the algebraic semantic of \L ukasiewicz infinite valued logic). 
Since (regular) projective objects are preserved under categorical 
equivalences, the main result in this paper provides also a
 complete description of projective MV-algebras. 
Moreover, this paper solves the sixth problem in the list of open problems
presented by Mundici in \cite{Mu2011}.

%%%%%%%%%%%%%%%%%%%%%%%%%%%%%%%%%
\section{Simplicial Geometry}
%%%%%%%%%%%%%%%%%%%%%%%%%%%%%%%%%
In this section we recall some definitions and results 
about  simplicial complexes used in the rest of the paper. 
We refer the reader to~\cite{Ewa1996}  and~\cite{Sta1967} 
for background 
in elementary polyhedral topology and simplicial complexes.

%%%%%%%%%%%%%%%
\subsection{Triangulations and subdivisions.}

During the rest of the paper, unless  otherwise stated, 
 $m$ and $n$ will denote positive integers, 
that is,  $m,n\in\{1,2,\ldots\}$.

For each $0\leq k\leq n$, a {\it $k$-simplex} $S\subseteq\R^n$ is the convex hull 
$S =\conv(v_{0},\dots,v_k)$ of
affinely independent points $v_{0},\dots,v_k\in\R^n$  . 
We let $\ver(S)=\{v_{0},\dots,v_k\}$ 
denote the set of vertices of $S$.
For any $F\subseteq \ver(S)$ (including the empty set), 
the convex hull $\conv(F)$ is called a
{\it face} of $S$.

A {\it polyhedron} $P$  in $\R^{n}$  is a finite union of 
(always closed) simplexes $P=S_{1}\cup\cdots\cup S_{t}$ in $\R^n$.
Every simplicial complex considered in this paper 
will have only finitely many simplexes.
For any  simplicial complex $\Delta$, 
its {\it support} $|\Delta|$ is the point-set union 
of all simplexes of $\Delta$. 
The set of its vertices is denoted by $\ver(\Delta)$, that is, 
 $\ver(\Delta)$ is the set of the vertices of its simplexes 
$\ver(\Delta)=\bigcup\{\ver(S)\mid S\in\Delta\}$.
Given a polyhedron $P$,  
a {\it triangulation}  of $P$ is a simplicial complex $\Delta$
 such that $P=|\Delta|$.

Given a polyhedron $P$ and 
triangulations $\Delta$ and $\Sigma$ of $P$
we say that $\Delta$ is a {\it subdivision} of $\Sigma$ if every
simplex of $\Delta$ is contained in a simplex of $\Sigma$.
For any  point $p \in P$, let $\Delta_{(p)}$ 
be the subdivision of $\Delta$
given by replacing every simplex $S\in \Delta$ that contains $p$
by the set of all simplexes of the form $\conv(F\cup\{p\})$,
where $F$ is any (including the empty) face of $S$ 
that does not contain $p$. 
The triangulation $\Delta_{(p)}$  of $P$ is called the 
{\it elementary stellar subdivision $\Delta_{(p)}$ 
of $\Delta$ at~$p$} 
\cite[ Definition III.2.1]{Ewa1996}. 
Given two triangulations $\Delta$ and $\nabla$ of $P$, 
%hp bad comma
$\nabla$ is said to be a {\it stellar subdivision} of~$\Delta$
if there exists a finite sequence $p_1,\ldots,p_m\in P$ such that, 
writing $\Delta_0=\Delta$
and $\Delta_{i}=(\Delta_{i-1})_{(p_i)}$ for each $i= 1,\ldots,m$, 
it follows that $\nabla=\Delta_m$.

\begin{lemma}[{\cite[Theorem~1]{Whi1935}}]\label{Lem:BlowUpSubdivision}
Let $P\subseteq\R^n$ be a polyhedron 
and $\Delta$ and $\Sigma$ be triangulations of $P$.
There exists a stellar subdivision $\nabla$ of $\Delta$ 
that is a subdivision of $\Sigma$. 
Moreover, if $S\in \Delta$ is such that $S\subseteq T$ 
for some $T\in\Sigma$, then $S\in\nabla$.
\end{lemma}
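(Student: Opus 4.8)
The plan is to realise the passage from $\Delta$ to a subdivision of $\Sigma$ by successively \emph{cutting} $\Delta$ along the finitely many affine hyperplanes determined by $\Sigma$, and to check that each such cut can be performed by elementary stellar subdivisions.

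First I would record the reduction. Let $\mathcal{H}=\{H_1,\dots,H_r\}$ be the finite set of affine hyperplanes in $\R^n$ supporting the codimension-one faces of the simplexes of $\Sigma$. Each maximal simplex $T\in\Sigma$ is an intersection of closed half-spaces bounded by members of $\mathcal{H}$, so the relatively open regions cut out of $P$ by $\mathcal{H}$ refine the relative interiors of the simplexes of $\Sigma$. Consequently, if a triangulation $\nabla$ of $P$ has the property that the relative interior of each of its simplexes meets no hyperplane $H_i$ transversally, that is, no simplex of $\nabla$ has interior points strictly on both sides of any $H_i$, then every simplex of $\nabla$ is contained in a single simplex of $\Sigma$, and hence $\nabla$ is a subdivision of $\Sigma$. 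Thus it suffices to refine $\Delta$ by stellar subdivisions until no simplex straddles any $H_i$.

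Next comes the technical heart: cutting a triangulation along one hyperplane $H$ using only elementary stellar subdivisions. Note that starring at a point $p$ lying in the relative interior of an edge, or of any face, of $\Delta$ is a legitimate elementary stellar subdivision in the sense defined above, since $p$ is then contained exactly in those simplexes having that face. I would proceed by induction on dimension: for each simplex $S$ of $\Delta$ whose relative interior meets $H$ on both sides, first insert, by elementary stellar subdivisions, the points where $H$ crosses the edges of $S$, which cuts all $1$-faces and, by induction on dimension, cuts the proper faces of $S$; then star $S$ itself at a point of $H\cap\,$relint$(S)$. Carried out over all simplexes of $\Delta$ in order of increasing dimension, this produces a stellar subdivision $\Delta'$ of $\Delta$ no simplex of which straddles $H$. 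Iterating the construction over $H_1,\dots,H_r$ yields the desired stellar subdivision $\nabla$ of $\Delta$, which by the reduction above is a subdivision of $\Sigma$. For the ``moreover'' clause, if $S\in\Delta$ satisfies $S\subseteq T$ for some $T\in\Sigma$, then for each $H_i$ the simplex $T$ lies in one closed half-space bounded by $H_i$, or $H_i$ supports a facet of $T$; in either case relint$(S)$ does not meet both open sides of $H_i$, so $S$ never straddles any hyperplane. Since the cutting procedure inserts new vertices only inside simplexes that straddle some $H_i$, it leaves $S$ untouched at every stage, whence $S\in\nabla$.

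The main obstacle is the inductive hyperplane-cutting step: one must check carefully that the prescribed sequence of elementary stellar subdivisions terminates, genuinely separates the two sides of $H$ on every simplex, and, crucially for the moreover clause, stars only inside straddling simplexes and never inside one already lying on a single side of $H$. This bookkeeping, rather than the global strategy, is where the real work lies.
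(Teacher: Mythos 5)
Your construction proves the existence half of the lemma (for full\--dimensional $\Sigma$), but it genuinely fails the ``moreover'' clause, and the failure is in the approach, not the bookkeeping. Your argument for that clause rests on the claim that if $S\subseteq T$ for some $T\in\Sigma$, then for every $H_i\in\mathcal{H}$ either $T$ lies in one closed half-space of $H_i$ or $H_i$ supports a facet of $T$, so that $S$ never straddles any $H_i$. This is false: a hyperplane supporting a facet of \emph{one} simplex of $\Sigma$ is an affine extension that can pass straight through the interior of a \emph{different} simplex of $\Sigma$. Concretely, let $P=[0,1]^2$ and let $\Sigma$ be the fan of four triangles obtained by coning the boundary edges of $P$ from the interior point $(1/3,1/3)$. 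The line $H\colon x+2y=1$ supports the facet $\conv((1,0),(1/3,1/3))$, yet it cuts through the interior of $T=\conv((0,0),(0,1),(1/3,1/3))$. Take any triangulation $\Delta$ of $P$ containing the simplex $S=\conv\bigl((0,0.4),(0,0.6),(0.1,0.5)\bigr)\subseteq T$: then $S$ straddles $H$, so your procedure stars points inside two edges of $S$ and subdivides it, whence $S\notin\nabla$, although the lemma requires $S\in\nabla$. This cannot be patched within your scheme: slicing along affine extensions of facet hyperplanes is inherently global and over-refines, while the clause demands a refinement lazy enough to leave every simplex of $\Delta$ already inside a simplex of $\Sigma$ untouched. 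That locality is exactly what Whitehead's combinatorial argument delivers; note the paper offers no proof at all but quotes \cite[Theorem~1]{Whi1935}, and the clause is not decorative---it is what makes Corollaries~\ref{Cor:Triang-Subset1} and~\ref{Cor:RefineByBlowUps}(ii) work and is used in Steps (E) and (G) of Theorem~\ref{Thm:Main}.

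Two further remarks on the part that does work. First, the ``technical heart'' you worry about is actually trivial: a simplex straddles $H$ if and only if it has a $1$-face with endpoints strictly on both sides of $H$, and every edge created by starring at a point $p\in H$ has $p$ as an endpoint; hence starring once in the relative interior of each strictly crossing edge (these are pairwise non-incident, so the order is irrelevant and the original straddle-free simplexes of $\Delta$ are never touched, by the common-face argument) already eliminates all straddling---no induction on dimension and no interior starring are needed, and subsequent cuts along $H_{i+1},\dots,H_r$ cannot recreate straddling of $H_1,\dots,H_i$ since subdivision only shrinks simplexes. Second, there is a smaller gap in your reduction: when $\Sigma$ has maximal simplexes of dimension $<n$ (the paper's polyhedra need not be pure or full-dimensional), the hyperplanes spanned by their facets do not cut them out, and you must instead choose, for each maximal simplex, finitely many hyperplanes realizing it as an intersection of closed half-spaces. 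That defect is repairable; the ``moreover'' clause is not.
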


\begin{corollary}
\label{Cor:Triang-Subset1}
Let $P\subseteq Q\subseteq\R^{n}$ be polyhedra 
and $\Delta$ a triangulation of $Q$.
Then there exists a stellar subdivision $\nabla$ of $\Delta$   
such that 
\begin{itemize}
\item[(i)] the simplicial complex $\nabla_{P}=\{S\in\nabla\mid S\subseteq P\}$ 
is a triangulation of $P$, and 
\item[(ii)] if $S\in\Delta$ is such that $S\subseteq P$, 
then $S\in\nabla$.
\end{itemize}
\end{corollary}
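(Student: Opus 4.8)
The plan is to reduce the statement to a single application of Whitehead's Lemma~\ref{Lem:BlowUpSubdivision}, fed with $\Delta$ and a second triangulation $\Sigma$ of $Q$ tailored to $P$. Concretely, I aim to produce a triangulation $\Sigma$ of $Q$ with two features: (a) $P$ is the support of the subcomplex $\Sigma_P=\{S\in\Sigma\mid S\subseteq P\}$ of $\Sigma$, that is $|\Sigma_P|=P$; and (b) every simplex of $\Delta$ that is contained in $P$ is \emph{itself} a simplex of $\Sigma$. Granting such a $\Sigma$, Lemma~\ref{Lem:BlowUpSubdivision} yields a stellar subdivision $\nabla$ of $\Delta$ that is a subdivision of $\Sigma$, and I claim this $\nabla$ already satisfies (i) and (ii).

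For (ii): if $S\in\Delta$ and $S\subseteq P$, then by feature (b) we have $S\in\Sigma$, so trivially $S\subseteq T$ with $T=S\in\Sigma$; the ``moreover'' clause of Lemma~\ref{Lem:BlowUpSubdivision} then forces $S\in\nabla$. For (i): the set $\nabla_P=\{S\in\nabla\mid S\subseteq P\}$ is closed under taking faces (a face of a simplex contained in $P$ is again contained in $P$), hence is a subcomplex of $\nabla$. Moreover $|\nabla_P|=P$: since $\nabla$ subdivides $\Sigma$, for each $T\in\Sigma$ the simplexes of $\nabla$ contained in $T$ triangulate $T$; so given $x\in P=|\Sigma_P|$, choosing $T\in\Sigma_P$ with $x\in T$ produces a simplex $S\in\nabla$ with $x\in S\subseteq T\subseteq P$, whence $x\in|\nabla_P|$. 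Thus $\nabla_P$ triangulates $P$, which is exactly~(i).

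The real work, and the main obstacle, is the construction of $\Sigma$ satisfying (a) and (b) simultaneously. Start from the subcomplex $\Delta^P=\{S\in\Delta\mid S\subseteq P\}$ of $\Delta$, which is a genuine subcomplex (being closed under faces) and triangulates the subpolyhedron $R=|\Delta^P|\subseteq P$. Invoking the relative triangulation theorem --- any triangulation of a subpolyhedron extends to a triangulation of the ambient polyhedron, standard in the references, see~\cite{Ewa1996} --- I first extend $\Delta^P$ to a triangulation $\Sigma_P$ of $P$, and then extend $\Sigma_P$ to a triangulation $\Sigma$ of $Q$, keeping each one a subcomplex of the next, so that $\Delta^P\subseteq\Sigma_P\subseteq\Sigma$. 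Feature (b) is then immediate, since every $\Delta$-simplex inside $P$ lies in $\Delta^P\subseteq\Sigma$.

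Feature (a) comes for free once $\Sigma_P$ is a subcomplex of $\Sigma$ with $|\Sigma_P|=P$: if $S\in\Sigma$ satisfies $S\subseteq P$, then a point in the relative interior of $S$ lies in $|\Sigma_P|$ and hence in the relative interior of some simplex of $\Sigma_P\subseteq\Sigma$; uniqueness of the carrier simplex in the complex $\Sigma$ forces that simplex to be $S$, so $S\in\Sigma_P$. Therefore $\{S\in\Sigma\mid S\subseteq P\}=\Sigma_P$, which indeed triangulates $P$. With $\Sigma$ in hand, the bookkeeping of the second paragraph completes the proof; the only nontrivial input is the relative triangulation theorem used to build $\Sigma$.
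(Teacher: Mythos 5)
Your overall strategy is the natural one, and almost certainly the intended one: the paper states this corollary without proof, as an immediate consequence of Lemma~\ref{Lem:BlowUpSubdivision}, and your bookkeeping around that lemma is correct --- (ii) follows from the ``moreover'' clause once every simplex of $\Delta$ inside $P$ is a simplex of $\Sigma$, and (i) follows because a subdivision of $\Sigma$ restricts to a triangulation of each simplex of $\Sigma$. The gap is the keystone you call the ``relative triangulation theorem''. That statement --- an arbitrary triangulation of a subpolyhedron extends to a triangulation of the ambient polyhedron --- is not in \cite{Ewa1996}, is not standard, and is in fact false. Take $R=\conv(a,b)$ a segment, triangulated by the single $1$-simplex $\conv(a,b)$, and let $P=\conv(a,b)\cup\conv(m_1,m_2,d)$, where $m_1,m_2$ are interior points of the segment and $d$ lies off the line through $a,b$. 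Every triangulation $\Lambda$ of $P$ restricts to a triangulation of $\conv(m_1,m_2,d)$: the $2$-simplexes of $\Lambda$ lie in $P$, hence in that triangle, and must cover it; consequently $\Lambda$ contains a $1$-simplex $e\subseteq\conv(m_1,m_2)$. Then $e\cap\conv(a,b)=e$ is not a face of $\conv(a,b)$, so $\conv(a,b)$ cannot itself belong to $\Lambda$. Thus your step (A), extending $\Delta^P$ to a triangulation of $P$, is already impossible in this example, and the construction of $\Sigma$ collapses.

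You could not have patched this step, because the same configuration refutes Corollary~\ref{Cor:Triang-Subset1} exactly as stated: let $Q=\conv(a,b,c)$ with $d$ interior to $Q$, let $\Delta$ be the complex of all faces of $Q$, and let $P$ be as above. Then $\conv(a,b)\in\Delta$ and $\conv(a,b)\subseteq P$, so (ii) would force $\conv(a,b)\in\nabla$, while (i) forces $\nabla$ to contain a $1$-simplex inside $\conv(m_1,m_2)$, and these two simplexes of $\nabla$ do not intersect in a common face. So the corollary needs a hypothesis excluding simplexes of $\Delta$ that lie in $P$ but meet the rest of $P$ badly along their relative interiors --- for instance, that $P\cap S$ is convex for every $S\in\Delta$. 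Such a hypothesis is available in both places the paper invokes the corollary: in Step~(E) of Theorem~\ref{Thm:Main} only condition (i) is used, and (i) alone is true unconditionally (take for $\Sigma$ any triangulation of $Q$ having $P$ as the support of a subcomplex and apply Lemma~\ref{Lem:BlowUpSubdivision}); in Corollary~\ref{Cor:RefineByBlowUps} the subpolyhedra are the preimages $\eta^{-1}(T_i)$, whose intersection with each $S\in\Delta$ is convex because $\eta$ is affine on $S$. Under that extra assumption a face of $S\in\Delta$ contained in $P$ is a face of the polytope $P\cap S$, the pathology above disappears, and an argument along your lines becomes viable; but as written, your proof rests on a false lemma, and the statement it targets is itself false in the stated generality.
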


Given  polyhedra $P\subseteq\R^n$  and $Q\subseteq\R^m$,
 a map $\eta\colon P \to Q$ is said to be 
{\it (continuous) piecewise-linear} 
if there exists a triangulation $\Delta$  of $P$ such that 
 for each $T\in \Delta$ 
the restriction $\eta{\upharpoonright}_T$ of $\eta$ to $T$ 
is an affine linear map. 
In this case we say that $\eta$ and $\Delta$ are {\it compatible}.

\begin{corollary}\label{Cor:RefineByBlowUps}
Let $P\subseteq\R^n$  and $Q\subseteq\R^m$ be polyhedra, 
and let $\Delta$ and $\nabla$ be triangulations 
of $P$ and $Q$, respectively. 
If $\eta\colon P \to Q$ is 
a piecewise linear map compatible with $\Delta$, 
there exists a stellar subdivision $\Delta'$ of $\Delta$ such that 
\begin{itemize}
\item[(i)]for each $S\in\Delta'$, there exists $T\in\nabla$ with 
$\eta(S)\subseteq T$, and
\item[(ii)] if $S\in\Delta$ is such that there exists $T\in\nabla$ 
with $\eta(S)\subseteq T$, then $S\in\Delta'$.
\end{itemize}
\end{corollary}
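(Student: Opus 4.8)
The plan is to produce first an auxiliary triangulation $\Sigma$ of $P$ that already satisfies the two requirements relative to $\eta$ and $\nabla$, and then to convert it into a genuine stellar subdivision of $\Delta$ by means of Whitehead's Lemma~\ref{Lem:BlowUpSubdivision}. To build $\Sigma$ I would pull $\nabla$ back along $\eta$. Since $\eta$ is compatible with $\Delta$, the restriction $\eta{\upharpoonright}_S$ is affine for each $S\in\Delta$, so the sets $\{S\cap\eta^{-1}(T)\mid T\in\nabla\}$ together with their faces subdivide $S$ into a polyhedral complex; because $\eta$ is single-valued these subdivisions agree on the shared faces of $\Delta$ and glue to a polyhedral complex $\mathcal C$ that refines $\Delta$ and in which every cell is carried by $\eta$ into a single simplex of $\nabla$. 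Triangulating $\mathcal C$ without subdividing those of its cells that are already simplexes yields a triangulation $\Sigma$ of $P$ with the property that every simplex of $\Sigma$ lies in a simplex of $\Delta$ and is sent by $\eta$ into a single simplex of $\nabla$.

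The decisive point is condition (ii): I must leave every $S\in\Delta$ with $\eta(S)\subseteq T$ for some $T\in\nabla$ (a \emph{good} simplex) undivided. Here I would use the following observation. Suppose $\eta(S)\subseteq T_0$ with $T_0\in\nabla$. For every $T\in\nabla$ one has $\eta(S)\cap T=\eta(S)\cap(T\cap T_0)$, and $T\cap T_0$ is a face $F$ of $T_0$. Each supporting hyperplane of $T_0$ that cuts out $F$ also supports $T_0\supseteq\eta(S)$, and its defining affine functional, composed with the affine map $\eta{\upharpoonright}_S$, is an affine function on $S$ whose maximal set is a face of $S$; intersecting over the finitely many such hyperplanes shows that $S\cap\eta^{-1}(T)=\eta{\upharpoonright}_S^{-1}(\eta(S)\cap F)$ is an intersection of faces of $S$, hence a face of $S$. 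Consequently $\mathcal C$ does not subdivide a good simplex at all: its only cells inside $S$ are faces of $S$, and these faces are themselves good and undivided. I may therefore choose the triangulation $\Sigma$ so that each good simplex of $\Delta$ is a simplex of $\Sigma$.

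Finally I would apply Lemma~\ref{Lem:BlowUpSubdivision} to the two triangulations $\Delta$ and $\Sigma$ of $P$ to obtain a stellar subdivision $\Delta'$ of $\Delta$ that is a subdivision of $\Sigma$. Condition (i) then holds because each simplex of $\Delta'$ lies in a simplex of $\Sigma$, which is carried into a single simplex of $\nabla$; condition (ii) follows from the ``moreover'' clause of Lemma~\ref{Lem:BlowUpSubdivision}, since each good $S\in\Delta$ lies in $\Sigma$, so $S\subseteq S\in\Sigma$ forces $S\in\Delta'$. The main obstacle is the middle step: checking that $\mathcal C$ is honestly a polyhedral complex and, above all, establishing the face-of-a-face computation that keeps the good simplexes intact; once this is in place, the conversion of $\Sigma$ into a stellar subdivision of $\Delta$ is precisely what Whitehead's theorem supplies.
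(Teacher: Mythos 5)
Your proposal is correct, but it follows a genuinely different route from the paper's own proof. The paper lists the maximal simplexes $T_1,\ldots,T_s$ of $\nabla$, observes that each preimage $\eta^{-1}(T_i)$ is a polyhedron, and iterates Corollary~\ref{Cor:Triang-Subset1} until every $\eta^{-1}(T_i)$ is triangulated by a subcomplex of a stellar subdivision $\Delta'$ of $\Delta$; conclusion (i) then follows because $P=\eta^{-1}(T_1)\cup\cdots\cup\eta^{-1}(T_s)$ forces each simplex of $\Delta'$ into some $\eta^{-1}(T_i)$. You instead build the common refinement $\mathcal{C}$ of $\Delta$ by the pullback of all of $\nabla$, triangulate it without cutting cells that are already simplexes, and invoke Lemma~\ref{Lem:BlowUpSubdivision} exactly once. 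What your route buys is conclusion (ii): your face computation (the zero set on $S$ of an affine functional that is nonnegative on $S$ is a face of $S$, and an intersection of faces is a face) shows that a simplex $S\in\Delta$ with $\eta(S)\subseteq T_0$ is itself a cell of $\mathcal{C}$, survives intact into $\Sigma$, and is then protected by the ``moreover'' clause of Lemma~\ref{Lem:BlowUpSubdivision}. The paper's proof is silent on (ii), and (ii) does not come for free from the iteration: the preservation clause of Corollary~\ref{Cor:Triang-Subset1} at stage $i$ protects only simplexes contained in $\eta^{-1}(T_i)$, so a simplex $S$ with $\eta(S)\subseteq T_j$ could in principle be subdivided at an earlier stage $i\neq j$ and is never restored afterwards; your construction is exactly the kind of argument needed to close that gap. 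What the paper's route buys is brevity and economy of tools, staying entirely within its stated corollaries, whereas you must appeal to two standard but here-unstated facts of PL topology --- that the cells $S\cap\eta^{-1}(T)$ together with their faces form a polyhedral complex, and that a polyhedral complex admits a triangulation, adding no new vertices, which keeps its simplicial cells intact (e.g.\ a pulling triangulation). Both are routine, and you correctly flag them as the points requiring verification.
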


\begin{proof}
Let $T_1,\ldots,T_s$ be the maximal simplexes in $\nabla$. 
Then $\eta^{-1}(T_i)$ is a polyhedron  for each $i=1,\ldots,s$. 
Repeated applications of Corollary~\ref{Cor:Triang-Subset1}, 
provide a triangulation $\Delta'$ of $P$ 
with the following properties:
\begin{itemize}
  \item[(i)] $\Delta'$ is a stellar subdivision of $\Delta$, and
  \item[(ii)] for each $i=1,\ldots,s$ the simplicial complex
 $\Delta'_{i}=\{S\in\Delta'\mid S\subseteq \eta^{-1}(T_i)\}$ 
is a triangulation of $\eta^{-1}(T_i)$.
\end{itemize}
Since 
$P=\eta^{-1}(Q)=\eta^{-1}(T_1)\cup\cdots\cup \eta^{-1}(T_s)$,
condition (ii) is to the effect that each $S\in \Delta'$ 
is contained in $\eta^{-1}(T_i)$ for some $i=1,\ldots,s$. 
Therefore, $S\subseteq \eta^{-1}(T_i)$, that is, 
$\eta(S)\subseteq T_i$. %, and the result follows.
\end{proof}

%%%%%%%%%%%%%%%%%%
\subsection{Abstract simplicial complexes and collapses}\label{Subsec:AbsSimCom}

Let us recall that 
a {\it {\rm (}finite{\rm)} abstract simplicial complex} 
is a pair $({V},\Sigma)$, where ${V}$ is a finite set, 
whose elements are called the {\it vertices} of $({V},\Sigma)$,
and $\Sigma$ is a collection of subsets of $V$
whose union is ${V}$,
having the property that every subset of an element of $\Sigma$ is
again an element of $\Sigma$.

Two abstract simplicial complexes $(V,\Sigma)$ and $( V',\Sigma')$
are said to be {\it  isomorphic} 
if there is a one-one map $\gamma$ from $V$ onto $V'$
such that  
\[
  \{w_{1},\ldots,w_{k}\}\in \Sigma
        \mbox{ if and only if }
  \{\gamma(w_{1}),\ldots,\gamma(w_{k})\}\in \Sigma'
\]
for each   $\{w_{1},\ldots,w_{k}\}\subseteq  V$.

For every complex $\Delta$,  the {\it skeleton} of  $\Delta$
is the  abstract simplicial complex $(\ver(\Delta),\Sigma_{\Delta})$, 
where
\[
  \Sigma_{\Delta}=\{\{w_{1},\ldots,w_{k}\}\subseteq 
  \ver(\Delta)\mid \conv(w_{1},\ldots,w_{k})\in\Delta\}.
\]
Two simplicial complexes $\Delta$ and $\nabla$ are 
 {\it  simplicially isomorphic} if their skeletons are isomorphic.

 A face $F$ of a simplex $S$ is called {\it a facet of $S$} 
if it is maximal among the proper faces of $S$.
A simplex $T$ of a simplicial complex $\nabla$ in $\cube$ 
has a {\it free face} $F$ (relative to $\nabla$) 
if $F$ is a  facet of $T$, but $F$ is a face 
of no other simplex of~$\nabla$.
It follows that $T$ is a maximal simplex of $\nabla$,
and the removal  of both $T$ and~$F$ from $\nabla$
results in the subcomplex 
$\nabla'= \nabla\setminus\{T,F\}$ of $\nabla$.
The transition  from $\nabla$ to $\nabla'$ 
is an {\it elementary collapse}  \cite[Definition~III.7.2]{Ewa1996}. 
(In \cite[p.247]{Whi1939}, elementary collapses 
are called ``elementary contractions''.)
If a simplicial complex $\Delta$ can be obtained from $\nabla$
by a sequence of elementary collapses
we say that $\nabla$  {\it collapses to}  $\Delta$.
We say that $\nabla$ is {\it collapsible} if it collapses to 
(the simplicial complex consisting of) 
one of its vertices.

\begin{example}\label{Ex:CollapsibleCube}
For each $n=1,2,\ldots$, $k=0,1,2,\ldots$, and points $v_0,\ldots, v_k\in\R^n$ 
the convex hull $\conv(v_0,\ldots, v_k)$ 
admits a collapsible triangulation $\Delta$. 
Further, $\Delta$ can be obtained in such a way that 
$\ver(\Delta)\subseteq\{v_0,\ldots, v_k\}$ 
(see for example \cite[Theorem~2.6]{Ewa1996}). 

In particular, the cube $\cube$ admits a collapsible triangulation.
Indeed, 
an example of a collapsible triangulation of the cube $\cube$
 is given by the  {\em standard triangulation} \cite[p. 60]{Se1982}, 
formed by the simplexes $\conv(C)$ 
whenever $C$ is a chain in  $\{0,1\}^{n}$ with the product order 

\end{example}

The following result is a direct application of 
\cite[Theorem~4]{Whi1939}.

\begin{lemma}\label{Lem:BlowUpsPreserveCollapsible}
Let $P\subseteq\R^n$ be a polyhedron 
and $\Delta$ a triangulation of $P$. 
If $\Delta$ is collapsible, 
then every stellar subdivision $\nabla$  of $\Delta$ is collapsible.
\end{lemma}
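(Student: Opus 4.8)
The plan is to reduce to a single elementary stellar subdivision and then transport the collapsing sequence of $\Delta$ across that subdivision. By the definition of stellar subdivision, $\nabla$ is obtained from $\Delta$ by a finite sequence of elementary stellar subdivisions $\Delta=\Delta_0$, $\Delta_1=(\Delta_0)_{(p_1)}$, \ldots, $\Delta_m=(\Delta_{m-1})_{(p_m)}=\nabla$. Hence, by induction on $m$, it suffices to establish the statement for a single elementary step: if $\Delta$ is collapsible and $p\in|\Delta|$, then $\Delta_{(p)}$ is collapsible. Indeed, once this single-step claim is available, collapsibility propagates from $\Delta_0$ to $\Delta_1$, then to $\Delta_2$, and so on up to $\nabla$.

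For the single step, I would first dispose of the trivial case $p\in\ver(\Delta)$, in which $\Delta_{(p)}=\Delta$. Otherwise let $\sigma\in\Delta$ be the unique simplex with $p$ in its relative interior, so that the subdivision modifies exactly the simplexes of $\Delta$ containing $\sigma$, replacing each such simplex $S$ by the cones $\conv(F\cup\{p\})$ over the faces $F$ of $S$ not containing $p$. Now fix a collapse of $\Delta$ to a vertex $v$, written as a sequence of elementary collapses, each removing a free pair $(T,F)$ with $F$ a free facet of the maximal simplex $T$. I would transport this sequence to $\Delta_{(p)}$ one elementary collapse at a time, maintaining the invariant that the current subdivided complex collapses onto the stellar subdivision at $p$ of the current stage of the original collapse. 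When $p\notin T$, the subdivision leaves $T$ and $F$ untouched and the same elementary collapse applies verbatim; once $p$ leaves the support during the collapse, all further steps carry over unchanged. When $p\in T$, both $T$ and possibly $F$ are replaced by cones with apex $p$, and one must instead perform a short sequence of elementary collapses on these cone simplexes removing exactly the subdivided part of $T\setminus F$; this is precisely the compatibility of collapse with stellar subdivision furnished by \cite[Theorem~4]{Whi1939}. Since the stellar subdivision at $p$ of the terminal vertex $\{v\}$ is again $\{v\}$, the transported sequence collapses $\Delta_{(p)}$ to $\{v\}$.

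The main obstacle is the local analysis in the case $p\in T$: one must verify that replacing the free pair $(T,F)$ by its stellar subdivision still leaves a configuration collapsible within $\Delta_{(p)}$, without disturbing simplexes outside the star of $p$ and without creating blocking incidences among the newly introduced cone faces. This is exactly the content of Whitehead's theorem, so the cleanest route is to isolate this local claim and invoke \cite[Theorem~4]{Whi1939} for it, after which the double induction—over the elementary collapses of $\Delta$, and over the elementary stellar subdivisions producing $\nabla$—is routine.
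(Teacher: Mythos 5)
Your proposal is correct and takes essentially the same route as the paper: the paper's entire proof is the observation that the lemma is a direct application of \cite[Theorem~4]{Whi1939}, and your argument---after the induction on elementary stellar subdivisions and the transport of the collapsing sequence---also defers its only nontrivial step (the local case $p\in T$) to that very same theorem. The extra scaffolding you build is sound, but the essential content is still carried by Whitehead's result, exactly as in the paper.
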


%%%%%%%%%%%%%%%%
\section{Rational polyhedra and $\Zed$-maps}

%%%%%%%%%%%%%%%%%%%%
\subsection{Rational polyhedra and triangulations}\label{Sec:RatPol}

A simplex $S$  is said to be {\it rational} 
if the coordinates of each $v\in\ver(S)$ are rational numbers.  
A set $P\subseteq \R^n$ is said to be a {\it rational polyhedron} 
if there are rational simplexes $T_{1},\ldots,T_{l}\subseteq\R^n$
such that $P=T_{1}\cup\cdots\cup T_{l}$. 

We say that a simplicial complex  $\Delta$ is {\it rational} 
if all simplexes of $\Delta$ are rational.
Given a rational polyhedron $P$,   a {\it rational triangulation}  
of $P$ is a triangulation of $P$ that is a rational simplicial complex.

\begin{theorem}[{\cite[Theorem 1]{Bey1977}}]
Let $P\subseteq \R^n$ be a rational polyhedron, 
and let $\Delta$ be a triangulation of $P$ with vertices 
$v_1,\ldots,v_k$. 
For any $0<\varepsilon \in\R$, 
there is a rational triangulation $\nabla$ of $P$
with vertices $w_1,\ldots,w_k$ such that:
\begin{itemize}
  \item[(i)] $||v_i -w_i|| < \varepsilon$ for each  $i = 1,\dots, k$, and
  \item[(ii)] the map sending the vertex $v_i$  of $\Delta$ 
     to the vertex $w_i$ of $\nabla$ defines a simplicial isomorphism 
     between $\Delta$ and $\nabla$.
\end{itemize}
\end{theorem}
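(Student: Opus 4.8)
The plan is to obtain $\nabla$ from $\Delta$ by moving the vertices $v_1,\dots,v_k$ to rational points $w_1,\dots,w_k$ one at a time, at each step replacing a single vertex while maintaining two invariants: that the current complex is still a triangulation of $P$ (same support), and that its skeleton is still the skeleton of $\Delta$. Since a simplex not containing the moved vertex is left untouched, after the $k$-th step every vertex is rational and the correspondence $v_i \mapsto w_i$ is, by construction, a simplicial isomorphism of $\Delta$ onto $\nabla$, which is condition (ii); and if at the $i$-th step we keep $\|w_i - v_i\| < \varepsilon$, condition (i) also holds. Thus everything reduces to the following one-vertex move: given the current triangulation and a vertex $v$, produce a rational point $v'$ arbitrarily close to $v$ such that replacing $v$ by $v'$, and leaving the remaining vertices fixed, preserves both support and skeleton.

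For the one-vertex move fix $v=v_i$, write $\mathrm{st}(v)=\bigcup\{S\in\Delta : v\in S\}$ for its closed star and $\mathrm{lk}(v)$ for its link, so that $L=|\mathrm{lk}(v)|$, the support of the link, is unaffected by the move. The set $R$ of admissible positions $v'$ is governed by two kinds of requirement. The \emph{combinatorial} requirement is that for every maximal simplex $S$ of $\mathrm{st}(v)$, writing $F=\conv(\ver(S)\setminus\{v\})$ for its facet opposite $v$, the point $v'$ lie in the open half-space of $\mathrm{aff}(S)$ bounded by $\mathrm{aff}(F)$ that contains $v$; these are finitely many strict linear conditions, all satisfied at $v'=v$, so they define a region that is open and contains $v$. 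The \emph{support} requirement is that the join $\bigcup_{S}\conv(\ver(F)\cup\{v'\})$ sweep out the same point set as $\mathrm{st}(v)$; this holds once $v'$ is confined to the \emph{carrier} $C$ of $v$, namely $C=v+L_v$ where $L_v$ is the lineality space of the tangent cone of $P$ at $v$ (the largest subspace such that a relative neighbourhood of $v$ in $v+L_v$ lies in $P$), and the displacement is small enough for the two cones over the fixed link $L$ to retile $\mathrm{st}(v)$. Hence $R$ is a nonempty subset of $C$ that is relatively open in $C$ and contains $v$.

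It remains to find a rational point of $R$ near $v$, and this is exactly where the hypothesis that $P$ is rational enters. Because $P=T_1\cup\dots\cup T_l$ is a union of \emph{rational} simplices, the local flat structure of $P$ at any point is rational: $v$ lies in the relative interior of some rational face $\sigma$ arising from the $T_j$ (equivalently from any rational triangulation of $P$), and $\mathrm{aff}(\sigma)\subseteq C$ is a rational affine subspace through $v$ whose rational points are dense in it, so rational points of $C$ accumulate at $v$. If $\dim C=0$ then $v$ is forced and is already rational, being a $0$-dimensional stratum of the rational polyhedron $P$, and no move is needed; otherwise $R\cap\Q^n$ is nonempty and accumulates at $v$, so we may choose $w_i\in R\cap\Q^n$ with $\|w_i-v_i\|<\varepsilon$. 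Carrying this out for $i=1,\dots,k$ yields the desired rational triangulation $\nabla$.

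I expect the main obstacle to be the justification of the support requirement together with the rationality of the carrier, that is, the two assertions that (a) a sufficiently small admissible displacement of $v$ inside $C$ leaves $|\Delta|=P$ unchanged — a local \emph{star-retiling} statement that, via invariance of domain, reduces to the move being a piecewise linear self-homeomorphism of $\mathrm{st}(v)$ fixing the link $L$ — and (b) the carrier $C$ is a rational affine subspace whose rational points accumulate at $v$, which is where rationality of $P$ is genuinely used. The cases requiring the most care are the non-manifold and boundary points of $P$, where $v$ may have a lower-dimensional or $0$-dimensional carrier: there the admissible moves are confined to a proper rational flat, and one must be certain that that flat still meets $\Q^n$ arbitrarily close to $v$.
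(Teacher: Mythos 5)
Note first that the paper itself gives no proof of this statement: it is imported verbatim from Beynon \cite[Theorem 1]{Bey1977}, so there is no internal argument to compare yours against; what follows assesses your proposal on its own merits.

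Your overall skeleton is the natural one and much of it is sound: the one-vertex-at-a-time induction preserving support and skeleton, the observation that the non-degeneracy conditions are open, the use of a rational triangulation of $P$ to place $v$ in the relative interior of a rational cell $\sigma$, the density of rational points in the rational flat $\mathrm{aff}(\sigma)$, and the disposal of the $0$-dimensional case ($v$ is then a vertex of a rational triangulation, hence rational). However, the entire mathematical content of Beynon's theorem is concentrated in the assertion you call the \emph{support requirement} and leave unproved: that for every sufficiently small displacement $v'$ of $v$ inside the carrier, the cones over the fixed link $L$ retile the star, i.e. $\bigcup_{F}\conv(\{v'\}\cup F)=|\mathrm{st}(v)|$ with all pairwise intersections being common faces. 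You assert this ("this holds once $v'$ is confined to the carrier \dots\ and the displacement is small enough") and then explicitly flag its justification as the expected main obstacle, which is an admission that it is not established. It is not a routine verification: one must show (i) that the only constraints on the kernel of the star-shaped set $|\mathrm{st}(v)|$ that are \emph{not} open at $v$ come from boundary pieces of $|\mathrm{st}(v)|$ through $v$, and that the affine hulls of these pieces all contain $v+\mathrm{lin}(\sigma)$ --- this is exactly where local translation-invariance of $P$ along $\sigma$ enters --- and (ii) a ray/exit-point (or invariance-of-domain) analysis showing that the new cone both stays inside and covers $|\mathrm{st}(v)|$, including at link points lying on lower-dimensional or non-manifold strata of $P$, and that the resulting family of simplexes intersects the untouched part of the complex in common faces. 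Without this lemma your argument is a correct \emph{reduction} of the theorem to its geometric core, not a proof of it.

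There is also a definitional flaw in the object on which that unproved claim rests. "The largest subspace $L_v$ such that a relative neighbourhood of $v$ in $v+L_v$ lies in $P$" need not exist: if $P$ is the union of two rational segments crossing at $v$, both segment directions span maximal subspaces with this property and there is no largest one (the set of all admissible directions is not a subspace). The correct substitute is either the translation-invariance space $\{w : w+T_vP=T_vP\}$ of the tangent cone $T_vP$, or --- simpler, and all your argument actually needs --- the flat $\mathrm{aff}(\sigma)$ itself, since $T_vP$ is invariant under translations parallel to $\sigma$ and hence every maximal simplex of $\mathrm{st}(v)$ has its affine hull containing $v+\mathrm{lin}(\sigma)$ (which is also what makes your "combinatorial requirement", phrased inside $\mathrm{aff}(S)$, meaningful for the displaced vertex). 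Rebuilding the proof on $\mathrm{aff}(\sigma)$ rather than on an ill-defined carrier, and then actually proving the star-retiling lemma, is what separates your outline from a complete proof.
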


\begin{corollary}\label{cor:Beynon} 
Let $P\subseteq Q\subseteq\R^n$ be such that 
$P$ is a rational polyhedron and $Q$ a convex polyhedron. 
Let $\Delta$ be a triangulation of $Q$ 
such that the simplicial complex 
$\Delta_{P}=\{S\in\Delta\mid S\subseteq P\}$ 
is a triangulation of $P$. 
Then there is triangulation $\nabla$ of~$Q$ such that:
\begin{itemize}
  \item[(i)] the simplicial comlex $\nabla_P=\{S\in\nabla\mid S\subseteq P\}$ 
     is a rational triangulation of~$P$, and
  \item[(ii)] $\nabla$ and $\Delta$ are simplicially isomorphic.
\end{itemize}
\end{corollary}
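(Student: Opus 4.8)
The plan is to obtain $\nabla$ from $\Delta$ by relocating vertices: every vertex lying outside $P$ is kept fixed, while every vertex lying in $P$ is nudged to a nearby rational point. First I would record that, since $\conv(v)$ is a simplex of $\Delta$ for each vertex $v$ and $P$ is closed, the set of vertices of $\Delta$ contained in $P$ is exactly $\ver(\Delta_P)$; call it $V_P$. Only the points of $V_P$ need to become rational, because condition~(i) concerns $\nabla_P$ alone; the remaining vertices can — and, since $Q$ need not be rational, must — stay put.

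The geometric heart of the matter is a single observation about how a rational polyhedron can meet the boundary of a convex one. Fix $v\in V_P$ and let $F_v$ be the smallest face of $Q$ with $v\in F_v$. Since $P$ is a finite union of rational simplexes, $v$ lies in the relative interior of some rational simplex $G_v\subseteq P$ (if $v$ is already rational, take $G_v=\{v\}$). I would then show $G_v\subseteq F_v$: when $v\in\partial Q$, choosing a supporting hyperplane $H$ with $F_v=Q\cap H$, the inclusion $G_v\subseteq Q\subseteq H^{+}$ together with $v\in\mathrm{relint}(G_v)\cap H$ forces $G_v\subseteq H$, whence $G_v\subseteq Q\cap H=F_v$ (when $v$ is interior, $F_v=Q$ and the inclusion is trivial). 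Consequently, arbitrarily close to $v$ there are rational points of $\mathrm{relint}(G_v)$, and for such points $w$ close enough to $v$ one has $w\in P$ and $w\in\mathrm{relint}(F_v)$. In other words, although $Q$ itself may be irrational, every point of $P$ admits rational approximants lying in $P$ and respecting the carrier face of $Q$.

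With this in hand I would adapt the proof of the preceding theorem of Beynon, restricting the admissible position of each $v\in V_P$ to $\mathrm{relint}(G_v)$. Since this region contains rational points arbitrarily close to $v$, the density and genericity arguments still apply and, for every $\varepsilon>0$, yield rational points $w_v$ with $\|w_v-v\|<\varepsilon$ such that the induced complex $\nabla_P$ is a rational triangulation of $P$ simplicially isomorphic to $\Delta_P$; by construction each $w_v$ stays in $\mathrm{relint}(F_v)$. Setting $w_v=v$ for the vertices outside $P$, I extend affinely over each simplex of $\Delta$ to a map $\ell\colon Q\to\R^n$. For $\varepsilon$ small enough $\ell$ is non-degenerate on each simplex and, because interior vertices move only slightly while boundary vertices remain in their carrier faces, $\ell$ is a piecewise linear homeomorphism of $Q$ onto $Q$; hence $\nabla=\ell(\Delta)$ is a triangulation of $Q$ simplicially isomorphic to $\Delta$, giving~(ii). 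Finally I would verify $\{S\in\nabla\mid S\subseteq P\}=\ell(\Delta_P)=\nabla_P$: images of simplexes of $\Delta_P$ stay in $P$ by the relative Beynon step, whereas a simplex $S\notin\Delta_P$ has a point in the open set $Q\setminus P$, a condition robustly preserved under small perturbations, so $\ell(S)\not\subseteq P$.

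The step I expect to be the main obstacle is reconciling the two support conditions simultaneously. The perturbation must keep $\nabla_P$ spanning exactly $P$ — which constrains how vertices on $\partial P$ may move — while keeping $\nabla$ spanning exactly $Q$ forces every vertex on $\partial Q$ to remain in its $Q$-carrier face. The observation of the second paragraph is precisely what makes these requirements compatible, for the rational ``room'' $\mathrm{relint}(G_v)$ around each $v\in V_P$ lies at once in $P$ and in $F_v$. Checking carefully that Beynon's admissible (generic) positions can all be taken inside this smaller rational region $\mathrm{relint}(G_v)$, rather than in all of $P$, and that the resulting restricted perturbation still realises $P$ as the support of $\nabla_P$, is the delicate point of the argument.
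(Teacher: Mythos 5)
The paper states this corollary without proof, as an immediate consequence of Beynon's theorem (perturb the vertices of $\Delta_P$ to nearby rational points, leave the remaining vertices of $\Delta$ alone), so your plan is the intended one, and your supporting-hyperplane observation --- that any rational simplex $G_v\subseteq P$ with $v\in\mathrm{relint}(G_v)$ is contained in the smallest face $F_v$ of the convex polyhedron $Q$ containing $v$ --- is correct and is exactly the ingredient needed to keep the perturbed complex spanning $Q$. The genuine gap is in the step you yourself flag as delicate: it is \emph{not} true that points of $\mathrm{relint}(G_v)$ close to $v$ are generically (or even typically) admissible positions for a perturbation of $\Delta_P$ that still triangulates $P$. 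What preservation of the support $P$ actually forces is that each $w_v$ stay in the \emph{stratum} of $v$, i.e.\ among the points near $v$ at which $P$ has the same local tangent-cone structure; and when $P$ is not locally convex at $v$, a flat rational simplex $G_v\subseteq P$ through $v$ can be transverse to that stratum, because a direction $d$ with $v\pm td\in P$ for small $t$ need not be a direction along which the local structure of $P$ is invariant.

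Concretely, let $W=([0,2]\times[0,1])\cup([0,1]\times[0,2])\subseteq\R^2$, let $P=W\times[0,1]\subseteq Q=[0,2]^2\times[0,1]$, and let $v=(1,1,1/\sqrt2)$, an irrational point on the re-entrant edge $E=\{(1,1)\}\times[0,1]$ of $P$; choose $\Delta$ so that $v$ is a vertex of $\Delta_P$. The rational triangle $G=\conv\bigl((\tfrac34,1,0),(\tfrac54,1,0),(1,1,1)\bigr)$ lies in $P$ and contains $v$ in its relative interior, so it is a legitimate choice of $G_v$ in your scheme. Yet every $w=(1+\epsilon,1,z)\in\mathrm{relint}(G)$ with $\epsilon>0$, no matter how small, is inadmissible: some simplex $S\in\Delta_P$ contains $v$ together with a vertex $u$ having $u_y>1$ (hence $u_x\le 1$), and after moving $v$ to $w$ the perturbed simplex contains points with $x>1$ and $y>1$, which lie outside $P$; so the perturbed complex cannot be a triangulation of $P$. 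Thus an open half of $\mathrm{relint}(G_v)$, accumulating at $v$, consists of forbidden positions (in fact the admissible ones are confined to $E\cap\mathrm{relint}(G_v)$), so density of rational points in $\mathrm{relint}(G_v)$ plus ``genericity'' cannot deliver the required $w_v$, and your adapted Beynon step collapses for such choices of $G_v$. The repair is to pin $G_v$ down: take $G_v$ to be the carrier of $v$ in a fixed rational triangulation $\Sigma$ of $P$ (then $\mathrm{relint}(G_v)$ near $v$ does lie in the stratum of $v$, your inclusion $G_v\subseteq F_v$ still applies, and the perturbation argument goes through). Alternatively, use Beynon's theorem as a black box and show that for small $\varepsilon$ its output automatically satisfies $w_i\in F_{v_i}$: since $v_i\in P=|\nabla_P|$, for small $\varepsilon$ the point $v_i$ must lie in the perturbed copy $S'$ of some $S\in\Delta_P$ containing $v_i$; taking a supporting hyperplane $H$ with $F_{v_i}=Q\cap H$ and all $w_j\in Q\subseteq H^+$, the convex-combination weights of $v_i\in S'$ must vanish on the $w_j\notin H$, so if $w_i\notin H$ then $v_i$ would lie within $\varepsilon$ of the face of $S$ opposite $v_i$, which is impossible once $\varepsilon$ is small. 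Either route also still owes the (standard, but unproven in your sketch) stability fact that a small perturbation of $\Delta$ fixing each vertex's carrier face of $Q$ yields a triangulation of $Q$.
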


\begin{lemma}\label{Lem:RationalizationOfMap}
Let $P\subseteq\R^n$  and $Q\subseteq \R^m$ 
be rational polyhedra, 
$\Delta$ a {\rm (}not necessarily rational{\rm )} triangulation of $P$ 
and $\nabla$ a rational triangulation of $Q$. 
Let $\eta\colon P \to Q$ be piecewise linear map 
compatible with $\Delta$.   
If $\eta$ is such that for each $S\in\Delta$
there exists $T\in\nabla$ with $\eta(S)\subseteq T$, 
then there exists $\eta'\colon P\to Q$ satisfying the following conditions:
\begin{itemize}
  \item[(i)]  $\eta'$ is a piecewise linear map 
     compatible with $\Delta$,
  \item[(ii)] for each $v\in\ver(\Delta)$,\ 
     $\eta'(v)$ is a rational point, and
\item[(iii)] for each $v\in\ver(\Delta)$,\
      if $\eta(v)$ is a rational point, then $\eta(v)=\eta'(v)$.
\end{itemize}
\end{lemma}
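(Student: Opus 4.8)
The plan is to perturb the values $\eta$ takes on vertices so that they become rational, while keeping the combinatorial structure of the map intact. The key observation is that the hypothesis "for each $S\in\Delta$ there exists $T\in\nabla$ with $\eta(S)\subseteq T$" means the image of each simplex lands inside a single rational simplex $T$ of $\nabla$. Since $\eta$ is affine on each $S$, the image $\eta(S)$ is the convex hull of the images $\eta(v)$ of the vertices $v\in\ver(S)$. Therefore, to build $\eta'$, I would define it vertex-by-vertex: for each $v\in\ver(\Delta)$, I must choose a rational point $\eta'(v)$ so that whenever $v$ belongs to a simplex $S\in\Delta$ with $\eta(S)\subseteq T$ for some $T\in\nabla$, the new value $\eta'(v)$ still lies in $T$. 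Once the vertex values are fixed rationally and consistently, I extend $\eta'$ affinely over each simplex of $\Delta$; this guarantees condition (i) automatically, and condition (iii) is handled by simply leaving $\eta'(v)=\eta(v)$ whenever $\eta(v)$ is already rational (a rational point needs no perturbation, and it still lies in the relevant rational $T$'s).

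The heart of the argument is the vertex-wise rational approximation respecting membership in faces of $\nabla$. For a fixed vertex $v$, consider the collection of all maximal simplexes $T_1,\ldots,T_r$ of $\nabla$ such that $v$ lies in some $S\in\Delta$ with $\eta(S)\subseteq T_j$. The point $\eta(v)$ lies in the intersection $T_1\cap\cdots\cap T_r$. Since each $T_j$ is a rational simplex, this intersection is itself a rational polyhedron, and in particular it is the support of a rational polyhedral set whose rational points are dense in it (rational points are dense in any rational polytope, because the smallest face containing $\eta(v)$ is spanned by rational vertices and $\eta(v)$ is a convex combination of them, which can be approximated by rational convex combinations). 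Hence I can pick a rational point $\eta'(v)$ arbitrarily close to $\eta(v)$ lying in $T_1\cap\cdots\cap T_r$. The crucial point is that membership of $\eta(v)$ in a rational simplex $T$ forces $\eta(v)$ to lie in a particular face of $T$, and the relative interior of that face contains rational points arbitrarily near $\eta(v)$; choosing $\eta'(v)$ in that same relative interior preserves exactly the set of simplexes of $\nabla$ that contain it.

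The main obstacle is verifying the compatibility condition (i), namely that after perturbing vertices the resulting piecewise-linear map still sends each simplex $S\in\Delta$ into a single $T\in\nabla$. This requires that the chosen rational vertex values be \emph{simultaneously} consistent: for a simplex $S$ with $\eta(S)\subseteq T$, every vertex $v\in\ver(S)$ must have its new value $\eta'(v)$ inside this same $T$, so that the affine image $\eta'(S)=\conv\{\eta'(v)\mid v\in\ver(S)\}\subseteq T$ by convexity of $T$. This is exactly why I define $\eta'(v)$ to lie in the intersection of \emph{all} the relevant $T_j$'s associated with $v$ at once, rather than handling one simplex at a time; the consistency is then automatic because if $\eta(S)\subseteq T$ then $T$ is among the $T_j$'s for every vertex of $S$. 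I would therefore carry out the steps in the following order: first, for each vertex $v$ collect the rational simplexes of $\nabla$ constraining it and identify the smallest common face containing $\eta(v)$; second, choose $\eta'(v)$ rational in the relative interior of that face (or equal to $\eta(v)$ if already rational), using density of rationals; third, extend affinely and check by convexity that each $\eta'(S)$ remains inside the same $T$ as $\eta(S)$, which yields (i), while (ii) and (iii) hold by construction.
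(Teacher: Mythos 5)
Your proposal is correct and takes essentially the same route as the paper: redefine the map vertex-by-vertex, use the minimal simplex (carrier) of $\nabla$ containing $\eta(v)$ to guarantee that the new rational value lies in every simplex $T\in\nabla$ constraining $v$, and conclude by convexity that each $\eta'(S)$ stays inside the same $T$ as $\eta(S)$. The only difference is that the paper simply takes $\eta'(v)$ to be a vertex of that carrier --- automatically rational --- which sidesteps your density-of-rational-points argument, but this is a cosmetic simplification of the same idea.
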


\begin{proof}
Let $v_1,\ldots,v_k$ be the vertices of $\Delta$ 
such that  $\eta(v_i)$ is not a rational point in~$Q$. 
For each $i=1,\ldots,k$, 
let $T_i\in\nabla$ be the minimal simplex  
such that  $\eta(v_i)\in T_i$. 
For each $i=1,\ldots,k$ let us, arbitrarily, 
fix $w_i\in \ver(T_i)$. 
Let $\eta'\colon P\to \R^m$ 
be the unique piecewise linear map 
compatible with $\Delta$ such that 
\[
\eta'(v)=
\begin{cases}
  w_i&\mbox{if }v=v_i \mbox{ for some }i=1,\ldots,k,\\
  \eta(v)& \mbox{otherwise.}
\end{cases}
\]
We claim that $\eta'(P)\subseteq Q$. 
Indeed, let $S\in\Delta$. 
If  $\eta(v)=\eta'(v)$ for each $v\in\ver(S)$ 
(that is, $\{v_1,\ldots,v_k\}\cap S=\emptyset$), 
then $\eta'(S)=\eta(S)$.
Now assume that $\{v_1,\ldots,v_k\}\cap S\neq\emptyset$. 
By hypothesis, there exists $T\in\nabla$ such that 
$\eta(S)\subseteq T$.
 For each  $v_i\in\{v_1,\ldots,v_k\}\cap S$, 
by the minimality of  $T_i$  
we have the inclusion $T_i\subseteq T$, 
whence $\eta'(v_i)\in T$. 
Therefore, $\eta'(S)=\conv(\eta'(\ver(S)))\subseteq T\subseteq Q$.
\end{proof}

%%%%%%%%%%%%%%%%
\subsection{Regular and strongly regular triangulations}
\label{SubSec:RegularTriang}

Recall that for any rational point $v$ in $\Q^{n}$ 
we let $\den(v)$ denote the lowest common denominator 
of the coordinates of $v$.
The vector $\widetilde{v}=\den(v)(v,1)\in\Zed^{n+1}$ 
is called the {\it homogeneous correspondent} of~$v$.
A simplex $S\subseteq \R^{n}$ is called {\it regular} 
if the set of homogeneous correspondents of its vertices 
is part of a basis of the free abelian group $\Zed^{n+1}.$
By a {\it regular triangulation} of a rational polyhedron $P$ 
we understand a rational triangulation of $P$ 
consisting of regular simplexes.

\begin{lemma}\label{lem:RegularFromRational}
Let $P\subseteq Q\subseteq \R^n$ be polyhedra 
and $\Delta$ a triangulation of~$Q$ 
such that  the simplicial complex 
$\Delta_P=\{S\in\Delta\mid S\subseteq P\}$ 
is a  triangulation of $P$.
If~$P$ is a rational polyhedron and $\Delta_P$ 
is a rational triangulation of $P$,
then there exists  a stellar subdivision $\nabla$ of $\Delta$, 
such that the simplicial complex 
$\nabla_P=\{S\in\nabla\mid S\subseteq P\}$ 
is a regular triangulation of $P$.
\end{lemma}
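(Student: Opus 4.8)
The plan is to invoke the classical desingularization of rational simplicial complexes and to run it on the ambient triangulation. Recall that to each rational simplex $S=\conv(v_0,\dots,v_d)$ one attaches a positive integer \emph{multiplicity}: the index of the subgroup $\Zed\widetilde{v_0}+\cdots+\Zed\widetilde{v_d}$ inside $\Zed^{n+1}\cap W$, where $W$ is the rational linear span of the homogeneous correspondents $\widetilde{v_0},\dots,\widetilde{v_d}$. Since $\Zed^{n+1}\cap W$ is a direct summand of $\Zed^{n+1}$, the simplex $S$ is regular precisely when its multiplicity is $1$, and a rational triangulation is regular iff all of its simplexes have multiplicity $1$. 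The desingularization step selects a simplex $S$ of multiplicity $>1$ and performs the elementary stellar subdivision at the point $b_S$ whose homogeneous correspondent is the primitive multiple of $\widetilde{v_0}+\cdots+\widetilde{v_d}$; as $b_S$ lies in the relative interior of $S$, this strictly lowers the multiplicities of the simplexes it affects, and finitely many such steps yield a regular triangulation (see \cite[Chapter~III]{Ewa1996}).

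First I would apply this procedure to the rational triangulation $\Delta_P$ of $P$. It produces a finite sequence of rational points $p_1,\dots,p_m$, each in the relative interior of a simplex of the triangulation of $P$ current at that stage, so that $p_i\in P$ for every $i$, and such that the iterated elementary stellar subdivision $(\Delta_P)_{(p_1)\cdots(p_m)}$ is a regular triangulation of $P$.

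Next I would carry out these same elementary stellar subdivisions, at the same points, on $\Delta$. This is legitimate because each $p_i\in P\subseteq Q=|\Delta|$, and it produces a stellar subdivision $\nabla=\Delta_{(p_1)\cdots(p_m)}$ of $\Delta$. The crux is the commutation identity $(\Delta_{(p)})_P=(\Delta_P)_{(p)}$ for every $p\in P$, from which $\nabla_P=(\Delta_P)_{(p_1)\cdots(p_m)}$ follows by induction on $m$; the right-hand side is exactly the regular triangulation of $P$ built above, so $\nabla_P$ is regular, as required.

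I expect the commutation identity to be the main obstacle. One inclusion is immediate, since subdividing $\Delta_P$ only introduces simplexes contained in $P$. For the other, I would use that $\Delta_P$ consists of precisely the simplexes of $\Delta$ lying in $P$, so that the carrier of $p$ in $\Delta$ (the minimal simplex containing $p$) already belongs to $\Delta_P$; consequently every simplex $\conv(F\cup\{p\})$ of $\Delta_{(p)}$ contained in $P$ has $F\subseteq P$, hence $F\in\Delta_P$, and therefore arises already from the elementary stellar subdivision of $\Delta_P$ at $p$. Verifying that this cone structure on the $P$-side is created without interference from the simplexes of $\Delta$ not contained in $P$ is the one point that requires care.
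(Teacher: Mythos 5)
Your proposal is correct and follows essentially the same route as the paper, whose entire proof consists of citing the affine version of Ewald's Theorem~8.5 to produce a regular triangulation of $P$ that is a stellar subdivision of $\Delta_P$; carrying out those same stellar moves on the ambient $\Delta$ via the commutation identity $(\Delta_{(p)})_P=(\Delta_P)_{(p)}$ (completed by the carrier argument you sketch: for a simplex $\conv(F\cup\{p\})\subseteq P$ of $\Delta_{(p)}$, the carrier in $\Delta$ of a relative-interior point of it lies in $\Delta_P$, contains $p$, and has $F$ as a face) is precisely the step the paper leaves implicit. One caution: your inline description of the desingularization mechanism is not the actual one — subdividing at the primitive multiple of $\widetilde{v_0}+\cdots+\widetilde{v_d}$ need not lower multiplicities (for the cone spanned by $(1,0,0)$, $(0,1,0)$, $(1,1,5)$, of multiplicity $5$, this subdivision leaves a subcone of multiplicity $5$); the classical procedure subdivides at a nonzero lattice point of the half-open fundamental parallelepiped, but since you, like the paper, delegate this to the citation of Ewald, this slip does not affect the validity of the argument.
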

\begin{proof}
The affine version of \cite[Theorem 8.5]{Ewa1996}  
yields a regular triangulation $\Sigma$ of $P$ 
that is  a stellar subdivision  of~$\Delta_P$. 
\end{proof}

A simplex $S$ is said to  be  {\it strongly regular} 
if it is regular and 
the greatest common divisor of the denominators of
the vertices of $S$ is equal to $1$.
A  rational triangulation $\Delta$  of a rational polyhedron $P$ is
said to be {\it strongly regular}  if each maximal simplex of $\Delta$  
is strongly regular.

\begin{lemma}[{\cite[Lemma~3.3]{CM2013}}]
\label{Lem:StronglyRegStable}
Let  $\Delta$ and $\nabla$ be regular triangulations
of a rational polyhedron $P\subseteq\R^n$.
Then $\Delta$ is strongly regular if and only if  $\nabla$ is.
\end{lemma}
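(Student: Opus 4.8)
The plan is to reduce strong regularity of an individual regular simplex to a condition on its affine hull that makes no reference to any triangulation, and then to transfer that condition between $\Delta$ and $\nabla$ by a local carrier argument.

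First I would prove the key characterization: \emph{a regular simplex $S=\conv(v_0,\dots,v_k)$ is strongly regular if and only if $\operatorname{aff}(S)\cap\Zed^n\neq\emptyset$}, where $\operatorname{aff}(S)$ is the affine hull of $S$. Write $d_i=\den(v_i)$ and $\widetilde{v_i}=d_i(v_i,1)$. Regularity means the $\widetilde{v_i}$ are part of a $\Zed$-basis of $\Zed^{n+1}$, so that $\operatorname{span}_{\R}(\widetilde{v_0},\dots,\widetilde{v_k})\cap\Zed^{n+1}$ is exactly the free $\Zed$-module generated by the $\widetilde{v_i}$. If $z\in\operatorname{aff}(S)\cap\Zed^n$, then $(z,1)$ lies in that real span, hence $(z,1)=\sum_i a_i\widetilde{v_i}$ for integers $a_i$, and reading the last coordinate gives $\sum_i a_i d_i=1$, so $\gcd(d_0,\dots,d_k)=1$. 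Conversely, if $\gcd(d_0,\dots,d_k)=1$, choose integers $a_i$ with $\sum_i a_i d_i=1$ and put $z=\sum_i(a_id_i)v_i$; then $z$ is an affine combination $\sum_i\mu_iv_i$ with $\mu_i=a_id_i$ and $\sum_i\mu_i=1$, so $z\in\operatorname{aff}(S)$, while $\sum_i a_i\widetilde{v_i}=(z,1)\in\Zed^{n+1}$ forces $z\in\Zed^n$. This step is routine linear algebra over $\Zed$.

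Next, given the two regular triangulations $\Delta$ and $\nabla$ of $P$ and a maximal simplex $T$ of $\nabla$, I would produce a maximal simplex $S$ of $\Delta$ with $\operatorname{aff}(S)=\operatorname{aff}(T)$. Take $p$ in the relative interior of $T$. Since $T$ is maximal in $\nabla$, the only simplex of $\nabla$ containing $p$ is $T$ itself, so a neighborhood of $p$ in $P$ is a relatively open $(\dim T)$-dimensional disk contained in $\operatorname{aff}(T)$. As such a disk cannot be covered by the simplexes of $\Delta$ of dimension $<\dim T$, some $S\in\Delta$ with $\dim S=\dim T$ meets $\operatorname{relint}(T)$ in a set of nonempty relative interior; choosing $p\in\operatorname{relint}(S)\cap\operatorname{relint}(T)$ gives $\operatorname{aff}(S)\subseteq\operatorname{aff}(T)$, and equality of dimensions yields $\operatorname{aff}(S)=\operatorname{aff}(T)$. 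The same local-dimension consideration shows $S$ is maximal in $\Delta$: if $S$ were a proper face of some $S'\in\Delta$, then a neighborhood of $p$ inside $S'\subseteq P$ would be of dimension exceeding $\dim T$, contradicting the local flatness of $P$ at $p$. I expect this carrier/local-dimension argument to be the main obstacle, since it must simultaneously match affine hulls and guarantee maximality, and it is where the hypothesis that $\Delta$ and $\nabla$ triangulate the \emph{same} polyhedron is really used.

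Finally I would combine the two steps. Because $\Delta$ and $\nabla$ are regular, both $S$ and $T$ are regular simplexes, so the characterization of the first step applies to each; since $\operatorname{aff}(S)=\operatorname{aff}(T)$, we get that $S$ is strongly regular iff $\operatorname{aff}(S)\cap\Zed^n\neq\emptyset$ iff $\operatorname{aff}(T)\cap\Zed^n\neq\emptyset$ iff $T$ is strongly regular. Thus if $\Delta$ is strongly regular then $S$, being maximal, is strongly regular, and hence so is $T$; as $T$ ranges over all maximal simplexes of $\nabla$, the triangulation $\nabla$ is strongly regular. Exchanging the roles of $\Delta$ and $\nabla$ gives the converse, completing the proof.
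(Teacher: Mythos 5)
Your argument is correct, but there is nothing in the paper to compare it with line by line: the paper does not prove this lemma at all, it imports it by citation from \cite[Lemma~3.3]{CM2013}. So what you have written is a genuine, self-contained alternative proof. Its engine is the triangulation-free characterization in your first step --- a \emph{regular} simplex is strongly regular if and only if its affine hull meets $\Zed^n$ --- and your lattice-theoretic verification is sound; note that only the direction ``integer point $\Rightarrow\gcd=1$'' uses regularity, while the converse holds for any rational simplex. This characterization is essentially the ``anchored'' condition of Je\v{r}\'abek that the paper itself alludes to (via \cite[Theorem~4.17]{Cab201X}) just before stating the lemma, so your route is very much in the spirit of Lemma~\ref{Lem:SRandConvexSet}. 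Your second step (matching each maximal $T\in\nabla$ with a maximal $S\in\Delta$ having the same affine hull) is also correct but compressed at one point: the Baire/covering argument only gives, at first, some $S\in\Delta$ with $S\cap D$ of nonempty interior in the disk $D$, hence $\dim S\geq\dim T$; to rule out $\dim S>\dim T$ you need the same local-flatness observation you later invoke for maximality, namely that $S\cap D$ is open in $S$ (because $D$ is open in $P$ and $S\subseteq P$) while it sits inside the $(\dim T)$-dimensional space $\mathrm{aff}(T)$, so that any open subset of $S$ would otherwise have too large a dimension. Once that is spelled out, equality of affine hulls, maximality of $S$, and the symmetric transfer of strong regularity between $\Delta$ and $\nabla$ all follow as you say. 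What your approach buys is a proof independent of the external reference and of any machinery about common (stellar) refinements of regular triangulations; its cost is the carrier/local-dimension argument, which has to be written with the care indicated above.
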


\begin{lemma}\label{Lem:SRandConvexSet}
Let $P\subseteq\R^{n}$ be a rational polyhedron. 
Then the following  conditions  are equivalent:
\begin{itemize}
  \item[(i)] there exists a strongly regular triangulation of $P$;
  \item[(ii)] for each rational point $v\in P$ 
 there exist $w\in \Zed^n$ and $\varepsilon>0$
 such that the convex segment $\conv(v,v+\varepsilon (w-v))$
 is contained in~$P$.
\end{itemize}
\end{lemma}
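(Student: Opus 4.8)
The plan is to prove the two implications separately, reducing both of them to a single piece of lattice linear algebra that I would record first. Let $T=\conv(u_0,\dots,u_k)$ be a \emph{regular} simplex, so that the homogeneous correspondents $\widetilde{u_0},\dots,\widetilde{u_k}$ extend to a $\Zed$-basis of $\Zed^{n+1}$, and let $w$ be a point of the affine hull of $T$. Writing $(w,1)=\sum_{i=0}^k r_i\widetilde{u_i}$ with $r_i\in\Q$ and $\sum_{i=0}^k r_i\den(u_i)=1$ (the constraint coming from the last coordinate), I would observe that $w\in\Zed^n$ if and only if every $r_i$ is an integer, precisely because $\widetilde{u_0},\dots,\widetilde{u_k}$ are part of an integral basis. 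Reading off the remaining coordinates, the barycentric coordinate of such a $w$ at $u_i$ equals $r_i\den(u_i)$. Hence the affine hull of $T$ contains an integer point if and only if the equation $\sum_{i=0}^k b_i\den(u_i)=1$ has a solution in integers $b_i$, that is, if and only if $\gcd(\den(u_0),\dots,\den(u_k))=1$; given that $T$ is regular, this is exactly the assertion that $T$ is strongly regular. This equivalence is the engine of both directions.

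For \textnormal{(i)}$\Rightarrow$\textnormal{(ii)}, fix a strongly regular triangulation $\Delta$ of $P$ and a rational point $v\in P$. I would take $S=\conv(u_0,\dots,u_j)$ to be the carrier of $v$, i.e.\ the unique simplex of $\Delta$ whose relative interior contains $v$, and enlarge it to a maximal simplex $T=\conv(u_0,\dots,u_k)$ of $\Delta$; then $T$ is strongly regular. Writing $v=\sum_{i\le j}\lambda_i u_i$ with all $\lambda_i>0$, the goal is to produce an integer point $w$ whose barycentric coordinates $\mu_i$ relative to $T$ are nonnegative at the vertices lying outside $S$, i.e.\ $\mu_i\ge 0$ for $j<i\le k$; for any such $w$ the point $(1-\varepsilon)v+\varepsilon w$ has nonnegative $T$-coordinates once $\varepsilon>0$ is small, so $\conv(v,v+\varepsilon(w-v))\subseteq T\subseteq P$ as required. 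By the opening observation the task is to solve $\sum_{i=0}^k b_i\den(u_i)=1$ over $\Zed$ with $b_i\ge 0$ for $j<i\le k$. Since $\gcd(\den(u_i))=1$ furnishes some integer solution, I would then transfer weight into the index $0$ (which belongs to $S$ and is therefore unconstrained in sign) using the kernel vectors that raise a constrained coordinate while lowering $b_0$, thereby forcing the finitely many constrained $b_i$ to become nonnegative without disturbing the value $1$. This sign control is the one delicate point of this implication.

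For \textnormal{(ii)}$\Rightarrow$\textnormal{(i)}, I would first obtain a regular triangulation of $P$: a rational polyhedron admits a rational triangulation, and Lemma~\ref{lem:RegularFromRational} applied with $Q=P$ refines it by a stellar subdivision to a regular triangulation $\nabla$ of $P$. It then suffices to verify that $\nabla$ is already strongly regular, i.e.\ that $\gcd(\den(u_0),\dots,\den(u_k))=1$ for each maximal simplex $T=\conv(u_0,\dots,u_k)$ of $\nabla$. Here I would test condition \textnormal{(ii)} at the barycenter $v$ of $T$, a rational point in the relative interior of $T$. Because $T$ is maximal and $v$ is interior to it, $T$ is the only simplex of $\nabla$ containing $v$, so the simplexes avoiding $v$ form a closed set missing $v$ and $P$ agrees with $T$ near $v$; consequently any segment $\conv(v,v+\varepsilon(w-v))\subseteq P$ issuing from $v$ is forced to remain in $T$, hence in the affine hull of $T$, which pins the integer point $w$ supplied by \textnormal{(ii)} to that affine hull. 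The opening observation now yields $\gcd(\den(u_i))=1$, so $T$ is strongly regular; as $T$ was arbitrary, $\nabla$ is strongly regular and \textnormal{(i)} follows. The main obstacle throughout is the integer-linear-algebra equivalence of the first paragraph together with the sign-controlled solution of the B\'ezout equation; once these are in hand, both implications are short.
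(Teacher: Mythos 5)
Your proposal is correct, but it follows a genuinely different, self-contained route. The paper handles (i)$\Rightarrow$(ii) by picking a strongly regular simplex $S\ni v$ and invoking \cite[Theorem~4.4]{Cab201X} to get a point $u\in S$ with $\gcd(\den(v),\den(u))=1$; the integer point is then the B\'ezout combination $(w,1)=a\widetilde{v}+b\widetilde{u}$, and the witnessing segment is simply $\conv(v,u)\subseteq S$. For (ii)$\Rightarrow$(i) the paper offers no argument at all: it cites \cite[Lemma~4.1]{Je2010} and \cite[Theorem~4.17]{Cab201X}. You instead prove directly the lattice-algebra fact underlying all of these citations --- a regular simplex is strongly regular if and only if its affine hull contains an integer point, via uniqueness of integer representations in a $\Zed$-basis of $\Zed^{n+1}$ --- and then derive both implications from it: for (i)$\Rightarrow$(ii) by solving $\sum_i b_i\den(u_i)=1$ over $\Zed$ with signs controlled at the vertices of a maximal simplex $T$ outside the carrier of $v$ (your kernel-vector transfer $b_i\mapsto b_i+t\den(u_0)$, $b_0\mapsto b_0-t\den(u_i)$ is sound, since the carrier vertex $u_0$ is sign-unconstrained and $\den(u_0)\geq 1$); for (ii)$\Rightarrow$(i) by taking an arbitrary regular triangulation and testing condition (ii) at the barycentre of each maximal simplex $T$, where your locality argument (the union of the simplexes other than $T$ is closed and misses the barycentre, so $P$ agrees with $T$ near it) correctly pins the integer point $w$ to the affine hull of $T$. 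What the paper's route buys is brevity, outsourcing the arithmetic to prior work; what yours buys is a proof readable without \cite{Cab201X} or \cite{Je2010}, and it yields the slightly stronger conclusion that under (ii) \emph{every} regular triangulation of $P$ is strongly regular, in harmony with Lemma~\ref{Lem:StronglyRegStable}. A small bonus: your explicit choice of a positive $\varepsilon$ sidesteps a sign slip in the paper's own computation, which requires $b\den(u)>0$ (not $a>0$) for the segment to leave $v$ toward $u$.
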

\begin{proof}
(i)$\Rightarrow$(i). Let $v\in P\cap\Q^n$.
 If $v\in\Zed^n$, the claim follows trivially by setting $w=v$  since $\conv(v,v+\varepsilon(w-v))=\{v\}\subseteq P$ for each $\varepsilon>0$. 
So assume that $v\notin\Zed^n$. Let $\Delta$ be a strongly regular triangulation of $P$ and $S$ be a strongly regular simplex in $\Delta$ such that $v\in S$. By \cite[Theorem~4.4]{Cab201X}, there exists $u\in S$ such that $\gcd(\den(v),\den(w))=1$. Let $a,b\in\Zed$ be such that $a>0$ and $a\den(v)+b\den(w)=1$. Since $v\notin \Zed^n$, \ $\den(v)\neq 1$ and $b\neq 1$. Let us set $w=a\den(v) v+b\den(u)u\Zed^n$ and $\varepsilon=1/(b\den(u))$. Then
$v+ \varepsilon (w-v)=u$ and $\conv(v,v+ \varepsilon (w-v))\subseteq S\subseteq P$, which proves that $P$ satisfies condition (ii).

(ii)$\Rightarrow$(i) This is just an application of \cite[Lemma~4.1]{Je2010} and \cite[Theorem~4.17]{Cab201X}.
\end{proof}
\begin{lemma}\label{Lem:ExistsCoprime}
Let $S$ be a strongly regular simplex. 
For each $k=1,2,\ldots$ there exists a rational point $v\in S$
such that $\gcd(k,\den(v))=1$.
\end{lemma}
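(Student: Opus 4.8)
The plan is to produce $v$ explicitly as a convex combination of the vertices of $S$, choosing the weights so that a natural common denominator of $v$ is forced to be coprime to $k$. Write $S=\conv(v_0,\ldots,v_d)$ and set $d_i=\den(v_i)$. The only feature of strong regularity I intend to use is that $\gcd(d_0,\ldots,d_d)=1$, which is part of the definition; in particular the regularity of $S$ will play no role in the argument.

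The key elementary observation is the following. For any non-negative integers $\mu_0,\ldots,\mu_d$, not all zero, put $D=\sum_i \mu_i d_i>0$ and
\[
v=\frac{1}{D}\sum_{i}\mu_i d_i\, v_i .
\]
Since the coefficients $\mu_i d_i/D$ are non-negative and sum to $1$, the point $v$ lies in $S$. Moreover $Dv=\sum_i \mu_i (d_i v_i)\in\Zed^n$, because $d_iv_i\in\Zed^n$ by the definition of $d_i=\den(v_i)$. Hence $D$ is a common denominator of the coordinates of $v$, so $\den(v)$ divides $D$. Consequently, to obtain $\gcd(k,\den(v))=1$ it suffices to arrange $\gcd(k,D)=1$.

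It then remains to choose the weights $\mu_i$ so that $D=\sum_i \mu_i d_i$ is coprime to $k$. Here I would use strong regularity in the form $\gcd(d_0,\ldots,d_d,k)=\gcd(d_0,\ldots,d_d)=1$, so that by B\'ezout there are integers $c_0,\ldots,c_d$ with $\sum_i c_i d_i\equiv 1\pmod k$. The point to check is that these weights can be made non-negative without destroying the congruence: replacing each $c_i$ by $c_i+kT$ for a sufficiently large common $T$ changes $\sum_i c_i d_i$ by $kT\sum_i d_i\equiv 0\pmod k$, so the residue modulo $k$ is preserved while all weights become non-negative (and not all zero, since $D\equiv 1\not\equiv 0\pmod k$ when $k\geq 2$; the case $k=1$ is trivial, as then any vertex works). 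With $\mu_i=c_i+kT$ we get $D\equiv 1\pmod k$, hence $\gcd(k,D)=1$, and the observation above gives $\gcd(k,\den(v))=1$.

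The only genuine subtlety, and the step I would be most careful about, is the interaction between the two requirements on the weights: remaining inside $S$ forces non-negativity, while controlling $\den(v)$ is a congruence condition modulo $k$. These are reconciled precisely because adding multiples of $k$ to the weights leaves the residue of $D$ unchanged; the secondary idea that makes everything fit is to weight the vertex $v_i$ proportionally to $\mu_i\den(v_i)$ rather than to $\mu_i$, so that the resulting common denominator is exactly the controlled quantity $D=\sum_i \mu_i d_i$.
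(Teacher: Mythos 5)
Your proof is correct, but it takes a genuinely different route from the paper's. The paper's proof is essentially a one-line citation: by Lemma~3.4 of the Cabrer--Mundici paper, for suitably large $m$ there is a rational point $v\in S$ whose denominator divides $km+1$, and such a denominator is automatically coprime to $k$. You instead construct $v$ explicitly: writing $d_i=\den(v_i)$ for the vertices $v_i$ of $S$, you choose integer weights $\mu_i\geq 0$ with $D=\sum_i\mu_i d_i\equiv 1\pmod k$ (possible by B\'ezout, since $\gcd(d_0,\ldots,d_d)=1$ by definition of strong regularity, after shifting each B\'ezout coefficient by a common large multiple of $k$), and take $v=\frac{1}{D}\sum_i\mu_i d_i v_i$. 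Your verification is sound on all the delicate points: the coefficients $\mu_i d_i/D$ are non-negative and sum to $1$, so $v\in S$; the relation $Dv=\sum_i\mu_i(d_iv_i)\in\Zed^n$ forces $\den(v)\mid D$ because $\den(v)$ is the \emph{least} common denominator; and $D\equiv 1\pmod k$ with $k\geq 2$ rules out the degenerate all-zero choice of weights, while $k=1$ is trivial. What your approach buys: it is self-contained, and it isolates exactly which hypothesis does the work---only the coprimality of the vertex denominators, not regularity---so it in fact proves the stronger statement that the conclusion holds for every rational simplex whose vertex denominators have greatest common divisor $1$. What the paper's approach buys: brevity, and through the cited lemma the sharper arithmetic conclusion that $\den(v)$ can be taken to divide a number of the form $km+1$, rather than merely being coprime to $k$.
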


\begin{proof}
By \cite[Lemma~3.4]{CM2013}, for a suitably large $m\in\{1,2,\ldots\}$, 
there exists a rational point $v\in S$ 
such that $\den(v)$ divides $(k\cdot m)+1$. 
Therefore, $\den(v)$ and $k$ are coprime.
\end{proof}

%%%%%%%%%%%%%%%%%%%
\subsection{$\Zed$-maps and weighted abstract simplicial complexes}
Take $P\subseteq \R^{n}$ and $Q\subseteq\R^m$ rational polyhedra, 
and  $\eta\colon P\rightarrow Q$ a piecewise linear map.
If there is a  triangulation $\Delta$  of $P$
such that the restriction of $\eta$ to every simplex $T$  
of  $\Delta$ coincides on $T$
with an (affine) linear map 
$\eta_{T}\colon \R^{n}\rightarrow \R^{m}$ with integer coefficients, 
then $\eta$ is called a {\it $\Zed$-map}. 
In other words, $\eta$ is a $\Zed$-map if and only if 
it is a piecewise linear map 
and each linear piece has integer coefficients. 
Since $\Zed$-maps are piecewise linear, 
if a $\Zed$-map $\eta$ is compatible
 with a rational triangulation $\Delta$, 
then $\eta$ is uniquely determined by its values on $\ver(\Delta)$.

\begin{lemma}[{\cite[Lemma~3.7]{Mu2011}}]\label{Lem-LinearMap}
Let $P\subseteq \R^{m}$  and $Q\in\R^n$ be rational polyhedra 
and $\Delta$ a regular triangulation of $P$.
If $\eta\colon P\to Q$ is a piecewise linear map 
compatible with $\Delta$, 
then the following conditions are equivalent:
\begin{itemize}
\item[(i)] $\eta$ is a $\Zed$-map;
\item[(ii)] for each $v\in \ver(\Delta)$, 
	$\eta(v)\in\Q^n$ and $\den(\eta(v))$ divides $\den{v}$.
\end{itemize}
\end{lemma}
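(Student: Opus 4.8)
The plan is to pass to homogeneous coordinates, where an affine map with integer coefficients becomes an honest $\Zed$-linear map, and to exploit the defining property of regular simplexes: the homogeneous correspondents of their vertices extend to a $\Zed$-basis of $\Zed^{m+1}$. Throughout I write a point $x\in\R^m$ in homogeneous form as $(x,1)\in\R^{m+1}$, so that $\widetilde{v}=\den(v)(v,1)$, and I record the elementary bridge: an affine map $x\mapsto Ax+b$ from $\R^m$ to $\R^n$ with integer $A$ and $b$ is exactly the restriction to the slice $\{x_{m+1}=1\}$ of an integer-linear map $\R^{m+1}\to\R^{n+1}$ whose last coordinate sends $(x,1)$ to $1$. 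The equivalence then amounts to matching these two descriptions on the vertices of $\Delta$.

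For the direction (i)$\Rightarrow$(ii), which is the routine one, I would fix $v\in\ver(\Delta)$ and choose a simplex $T$ of $\Delta$ with $v\in T$ on which $\eta$ agrees with an integer affine map. Homogenizing that affine map gives an integer matrix $M$, and evaluating at the integer vector $\widetilde{v}=\den(v)(v,1)$ yields $M\widetilde{v}=\den(v)(\eta(v),1)\in\Zed^{n+1}$. Hence $\den(v)\,\eta(v)\in\Zed^n$, which says precisely that $\eta(v)\in\Q^n$ and that $\den(\eta(v))$ divides $\den(v)$. Note that regularity plays no role here.

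The substantive direction is (ii)$\Rightarrow$(i), and this is where regularity is used. Fix a simplex $T=\conv(v_0,\ldots,v_k)$ of $\Delta$. By regularity, $\widetilde{v_0},\ldots,\widetilde{v_k}$ is part of a basis of $\Zed^{m+1}$; I would extend it to a full basis $\widetilde{v_0},\ldots,\widetilde{v_k},b_{k+1},\ldots,b_m$. Since $\den(\eta(v_i))$ divides $\den(v_i)$, each vector $\den(v_i)(\eta(v_i),1)$ is integral, so I may define a $\Zed$-linear map $L\colon\Zed^{m+1}\to\Zed^{n+1}$ by $L(\widetilde{v_i})=\den(v_i)(\eta(v_i),1)$ and $L(b_j)=0$; because the chosen vectors form a basis, $L$ is well defined and given by an integer matrix. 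Using the barycentric coordinates $\mu_0,\ldots,\mu_k$ of $x\in T$ together with $(v_i,1)=\widetilde{v_i}/\den(v_i)$, I obtain $L(x,1)=\sum_i\mu_i(\eta(v_i),1)=(\eta(x),1)$, where the last equality uses that $\eta$ is affine on $T$. In particular the final coordinate of $L(x,1)$ equals $1$ for every $x\in T$, so $L$ has the homogenized affine form and its restriction to the slice is an integer affine map that coincides with $\eta$ on $T$. Running over all simplexes of $\Delta$ exhibits $\eta$ as a $\Zed$-map.

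The main obstacle is exactly this last direction: without regularity the interpolating map $L$ would only be forced to be rational, and one could not conclude that the affine pieces of $\eta$ have integer coefficients. Completing the homogeneous correspondents to an integer basis is what upgrades $L$ from rational to integral. A minor point to verify is consistency across simplexes: the integer matrix produced for $T$ need not agree with the one for an adjacent simplex, but this is immaterial, since being a $\Zed$-map is a simplex-by-simplex condition and the compatibility of $\eta$ with $\Delta$ already guarantees that the pieces agree on shared faces.
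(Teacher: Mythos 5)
Your proof is correct, but note that on this lemma the paper has no internal proof to compare against: it is quoted from \cite{Mu2011} (Lemma~3.7) without argument. Your homogenization proof is the standard one for this result (and essentially the one in the cited source): (i)$\Rightarrow$(ii) follows by evaluating the homogenized integer matrix at $\widetilde{v}$, with regularity playing no role, while in (ii)$\Rightarrow$(i) the extension of $\widetilde{v_0},\ldots,\widetilde{v_k}$ to a $\Zed$-basis of $\Zed^{m+1}$ is precisely what forces the interpolating matrix $L$ to be integral, and the last-coordinate computation correctly recovers $\eta$ as an integer affine map on each simplex. One small imprecision worth fixing: in (i)$\Rightarrow$(ii) the definition of $\Zed$-map only supplies \emph{some} triangulation of $P$ on whose simplexes $\eta$ is integer affine, and it need not be $\Delta$; you should take the simplex $T$ containing $v$ from that witnessing triangulation (all your computation needs is one integer affine map $p$ with $\eta(v)=p(v)$), after which the argument goes through unchanged.
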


A {\it weighted abstract simplicial complex} 
is a triple $(V,\Sigma, \omega)$
where $(V,\Sigma)$ is an (always finite) abstract simplicial complex
and $\omega$ is a map from $V$ into the set $\{1,2,3,\ldots\}.$

Let $\mathfrak{W}=(V,\Sigma, \omega)$ 
be a weighted abstract simplicial complex 
with vertex set $V=\{v_{1},\ldots,v_{n}\}$.
Let $e_{1},\ldots,e_{n}$ be the standard basis vectors 
of $\R^{n}$.
We then use the notation $\Delta_{\mathfrak{W}}$
for the complex whose vertices  are 
\[
   v'_{1} = e_{1}/\omega(v_{1}),\ldots,v'_{n}=e_{n}/\omega(v_{n})\in\R^n,
\]
and whose $k$-simplexes ($k=0,\ldots,n$) are given by
\[
  \conv(v'_{i(0)},\ldots, v'_{i(k)})\in \Delta_{\mathfrak{W}}\quad
      \text{ if and only if }\quad
   \{ v_{i(0)},\ldots, v_{i(k)}\}\in \Sigma.
\]
Trivially, $\Delta_{\mathfrak{W}}$ is a regular triangulation 
of the polyhedron $|\Delta_{\mathfrak{W}}|\subseteq [0,1]^{n}$.  
The polyhedron $|\Delta_{\mathfrak{W}}|$ 
is called the {\it geometric realisation} of $\mathfrak{W}$.

%%%%%%%%%%%%%%%%%%%%%
\section{$\Zed$-retracts and finitely generated projective unital $\ell$-groups}

\begin{definition}
A rational polyhedron $P\subseteq [0,1]^n$ is said to be a 
{\it $\Zed$-retract} 
if there exists a $\Zed$-map $\eta\colon [0,1]^n\to P$ 
such that 
\[
\eta(v)=v\mbox{ for each }v\in P.
\]
\end{definition}

The following result shows that
 the notion of $\Zed$-retract is independent of the cube $\cube$.

\begin{lemma}[{\cite[Lemma~4.2]{CM2013}}]
\label{Lem:Independence}
Let $P\subseteq\cube$ be a rational polyhedron. 
If there is $m\in\{1,2,\ldots\}$ 
and $\Zed$-maps $\mu\colon [0,1]^m\to P$ 
and $\nu\colon P\to [0,1]^m$ 
such that $\mu(\nu(v))=v$ for each $v\in P$, 
then $P$ is a $\Zed$-retract.
\end{lemma}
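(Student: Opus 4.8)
The plan is to reduce everything to an \emph{extension} problem. The content of the lemma is that the auxiliary cube $[0,1]^m$ may have dimension different from $n$, so the whole difficulty is to manufacture, out of the factorisation $\mu\circ\nu=\mathrm{id}_P$ through $[0,1]^m$, a genuine $\Zed$-retraction of the cube $\cube$ in which $P$ actually sits. The key observation is the following: if I can extend the $\Zed$-map $\nu\colon P\to[0,1]^m$ to a $\Zed$-map $\bar\nu\colon\cube\to[0,1]^m$ with $\bar\nu{\upharpoonright}_P=\nu$, then I am done. Indeed, setting $\eta=\mu\circ\bar\nu\colon\cube\to P$, for every $v\in P$ one has $\eta(v)=\mu(\bar\nu(v))=\mu(\nu(v))=v$, so $\eta$ fixes $P$ pointwise; and $\eta$ is a composition of $\Zed$-maps, hence (refining a compatible triangulation of $\cube$ by Corollary~\ref{Cor:RefineByBlowUps}, so that $\bar\nu$ sends each simplex into a single linearity simplex of $\mu$, on which both pieces are integer affine) again a $\Zed$-map. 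By the definition of $\Zed$-retract this exhibits $P$ as a $\Zed$-retract.

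So the heart of the proof is the extension of $\nu$. Since $\nu$ is a $\Zed$-map on the rational polyhedron $P$, its regions of linearity are rational polyhedra, so there is a \emph{rational} triangulation $\Delta_0$ of $P$ compatible with $\nu$. I would then realise $\Delta_0$ inside the cube: first extend it to a triangulation $\Delta_1$ of $\cube$ whose restriction $\{S\in\Delta_1\mid S\subseteq P\}$ equals $\Delta_0$ (assembling Corollary~\ref{Cor:Triang-Subset1} with the common-refinement Lemma~\ref{Lem:BlowUpSubdivision}, performing the required stellar subdivisions at points of $P$ so that they extend to the ambient complex), and then apply Lemma~\ref{lem:RegularFromRational} to $P\subseteq\cube$ to obtain a stellar subdivision $\nabla$ of $\Delta_1$ for which $\nabla_P=\{S\in\nabla\mid S\subseteq P\}$ is a \emph{regular} triangulation of $P$. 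Because $\nabla_P$ subdivides $\Delta_0$, each of its simplexes sits inside a simplex of $\Delta_0$, so $\nu$ remains affine on every simplex of $\nabla_P$; moreover, since I only used stellar subdivisions after $\Delta_0$, no vertex was displaced and rationality is preserved, which is why I can avoid the vertex-moving rationalisation of Corollary~\ref{cor:Beynon}.

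Now define $\bar\nu$ on $\ver(\nabla)$ by $\bar\nu(v)=\nu(v)$ when $v\in P$ (equivalently $v\in\ver(\nabla_P)$) and $\bar\nu(v)=0\in[0,1]^m$ otherwise, and extend affinely over each simplex of $\nabla$. Each simplex maps into the convex hull of values lying in $[0,1]^m$, so $\bar\nu(\cube)\subseteq[0,1]^m$, and $\bar\nu{\upharpoonright}_P=\nu$ since the two affine maps agree on the vertices of every simplex of $\nabla_P$. Finally $\bar\nu$ is a $\Zed$-map: $\nabla$ is regular, and for each $v\in\ver(\nabla)$ one has $\den(\bar\nu(v))\mid\den(v)$ --- trivially $\den(0)=1$ for the new vertices, and $\den(\nu(v))\mid\den(v)$ for $v\in\ver(\nabla_P)$ because $\nu$ is a $\Zed$-map compatible with the regular triangulation $\nabla_P$ (Lemma~\ref{Lem-LinearMap}) --- so Lemma~\ref{Lem-LinearMap} applies in the converse direction to conclude that $\bar\nu$ is a $\Zed$-map. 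I expect the main obstacle to be precisely this triangulation bookkeeping: producing a single regular triangulation of the cube whose trace on $P$ is simultaneously regular and coarse enough to keep $\nu$ affine, while controlling denominators vertex by vertex. Once that is in place the retraction $\eta=\mu\circ\bar\nu$ drops out immediately.
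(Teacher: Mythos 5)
Your overall strategy---extend $\nu$ to a $\Zed$-map $\bar\nu\colon\cube\to[0,1]^m$ and set $\eta=\mu\circ\bar\nu$---is exactly the right reduction, and it is in essence the route of the cited source (this paper does not reprove the lemma; it quotes \cite[Lemma~4.2]{CM2013}). Your first paragraph is sound: compositions of $\Zed$-maps are $\Zed$-maps, and $\eta$ fixes $P$ pointwise, so the whole burden falls on producing the extension $\bar\nu$.

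The gap is in your last step, where you write ``$\nabla$ is regular'' and then invoke Lemma~\ref{Lem-LinearMap} in the direction (ii)$\Rightarrow$(i) for $\bar\nu$. Nothing in your construction makes the ambient triangulation $\nabla$ of $\cube$ regular: Lemma~\ref{lem:RegularFromRational}, which is what you applied, only regularises the trace $\nabla_P=\{S\in\nabla\mid S\subseteq P\}$; the simplexes of $\nabla$ not contained in $P$ are just stellar subdivisions of whatever you started from, and the paper's statements of Lemma~\ref{Lem:BlowUpSubdivision} and Corollary~\ref{Cor:Triang-Subset1} do not even guarantee that their vertices are rational, so $\den(v)$ need not be defined for them. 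Regularity of the \emph{whole} complex carrying $\bar\nu$ is essential in Lemma~\ref{Lem-LinearMap}: on a non-regular simplex the divisibility condition does not force integer coefficients. Concretely, on the segment $\conv(0,2/3)\subseteq[0,1]$ (not regular, since the homogeneous correspondents $(0,1)$ and $(2,3)$ generate a sublattice of index $2$ in $\Zed^2$), the affine map with $0\mapsto 0$ and $2/3\mapsto 1/3$ satisfies $\den(\bar\nu(v))\mid\den(v)$ at both vertices but equals $x\mapsto x/2$, which is not a $\Zed$-map. So, as written, your $\bar\nu$ may fail to be a $\Zed$-map on simplexes meeting $\cube\setminus P$. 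The repair stays inside your own toolkit: arrange that $\Delta_1$ is a rational triangulation of all of $\cube$ (choose the stellar subdivision points rational, which is possible because $P$, $\Delta_0$ and the cube are rational), and then apply the desingularisation underlying Lemma~\ref{lem:RegularFromRational} (the affine version of \cite[Theorem~8.5]{Ewa1996}) to the whole complex $\Delta_1$ rather than to its trace on $P$. This yields a regular triangulation $\nabla$ of $\cube$ subdividing $\Delta_1$; its trace on $P$ is then automatically a regular triangulation of $P$ compatible with $\nu$, and your final appeal to Lemma~\ref{Lem-LinearMap} becomes legitimate.
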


By definition, $\McN(P)$ is the set of all 
real-valued $\Zed$-maps defined on $P$.
 The connection between $\Zed$-retracts 
and finitely generated unital $\ell$-groups
 is given by the following:

\begin{theorem}[{\cite[Theorem~4.1]{CM2013}}]
\label{Thm_Projective_Retraction} 
A  unital $\ell$-group $(G,u)$ is finitely generated projective 
if and only if it is isomorphic to $\McN(P)$ for some $\Zed$-retract $P$.
\end{theorem}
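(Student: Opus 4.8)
The plan is to reduce the statement to the universal-algebraic fact that, in any variety, an algebra is projective precisely when it is a retract of a free algebra, and then to transport retracts across the contravariant correspondence $P\mapsto\McN(P)$ between rational polyhedra (with $\Zed$-maps as morphisms) and unital $\ell$-groups. Two inputs are needed at the outset. First, since unital $\ell$-groups are term-equivalent to MV-algebras by Mundici's equivalence, they form a variety, so ``finitely generated projective'' coincides with ``retract of a finitely generated free object''. Second, the finitely generated free unital $\ell$-group on $n$ generators is isomorphic to $\McNn$, with the $n$ coordinate projections as free generators; this identifies the free objects one must realise geometrically.

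For the direction asserting that a $\Zed$-retract yields a projective group, suppose $P\subseteq\cube$ is a $\Zed$-retract, witnessed by a $\Zed$-map $\eta\colon\cube\to P$ with $\eta(v)=v$ for all $v\in P$. Writing $\iota\colon P\hookrightarrow\cube$ for the inclusion, which is itself a $\Zed$-map, one has $\eta\circ\iota=\mathrm{id}_P$. Precomposition with a $\Zed$-map is a unital $\ell$-homomorphism, so we obtain $\McN(\iota)\colon\McNn\to\McN(P)$ and $\McN(\eta)\colon\McN(P)\to\McNn$ with $\McN(\iota)\circ\McN(\eta)=\mathrm{id}_{\McN(P)}$, since $(f\circ\eta)|_P=f\circ(\eta|_P)=f$ for $f\in\McN(P)$. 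Thus $\McN(P)$ is a retract of the free object $\McNn$, hence projective; being a homomorphic image of a finitely generated algebra, it is finitely generated as well.

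For the converse, let $(G,u)$ be finitely generated and projective. Finite generation yields a surjective unital $\ell$-homomorphism $\pi\colon\McNn\twoheadrightarrow(G,u)$; projectivity splits $\pi$, producing a section $\sigma\colon(G,u)\to\McNn$ with $\pi\circ\sigma=\mathrm{id}$. Then $e=\sigma\circ\pi$ is an idempotent endomorphism of $\McNn$ whose image is isomorphic to $(G,u)$. Under the correspondence, $e=\McN(\eta)$ for a $\Zed$-self-map $\eta\colon\cube\to\cube$, and contravariance turns $e\circ e=e$ into $\McN(\eta\circ\eta)=\McN(\eta)$, whence $\eta\circ\eta=\eta$ by faithfulness. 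Setting $P=\eta(\cube)$, the image of the cube under a piecewise-linear integer map is a rational polyhedron, and $\eta\circ\eta=\eta$ gives $\eta(v)=v$ for every $v\in P$; hence $\eta\colon\cube\to P$ is a $\Zed$-retraction and $P$ is a $\Zed$-retract with $\McN(P)\cong\mathrm{image}(e)\cong(G,u)$. Should the section land naturally in a different cube $[0,1]^m$, Lemma~\ref{Lem:Independence} guarantees that the $\Zed$-retract property does not depend on the ambient dimension.

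The crux, and the step I expect to be the main obstacle, is establishing $\McN(-)$ as a faithful contravariant functor in which every unital $\ell$-homomorphism $\McN(Q)\to\McN(P)$ arises uniquely as $\McN(\zeta)$ for a $\Zed$-map $\zeta\colon P\to Q$, together with the two geometric facts invoked above: that the image of a $\Zed$-self-map of $\cube$ is a rational polyhedron, and that algebraic idempotency of $e$ translates exactly into $\eta\circ\eta=\eta$. Once this dictionary is in place the retract transport is purely formal; without it, the passage from an abstract idempotent endomorphism to a genuine integer self-map of the cube, and the verification that its fixed-point set is the sought rational polyhedron, carries the entire weight of the argument.
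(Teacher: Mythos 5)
The paper does not actually prove this statement: it imports it verbatim from \cite[Theorem~4.1]{CM2013}, so there is no internal proof to compare against. Your reconstruction is, in substance, the standard argument behind that citation: identify $\McNn$ as the free object, characterise finitely generated projectives as retracts of it, translate the idempotent $\sigma\circ\pi$ into an idempotent $\Zed$-map $\eta$ of $\cube$, and take $P=\eta(\cube)$. Both directions are sound in outline; in particular the contravariance/faithfulness computation, the fact that $\eta\circ\eta=\eta$ forces $\eta$ to fix its image pointwise, and the identification $\McN(P)\cong\mathrm{image}(e)$ all go through.

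There is, however, one foundational claim that is false as stated: unital $\ell$-groups are \emph{not} term-equivalent to MV-algebras and do \emph{not} form a variety. The strong-unit condition is not equational (for instance, the class is not closed under infinite direct products: the coordinatewise unit in $\prod_i(G_i,u_i)$ is in general not a strong unit), and Mundici's $\Gamma$ is a categorical equivalence, not a term equivalence --- $\Gamma(G,u)=[0,u]$ is only the unit interval of $G$, not $G$ itself. So you cannot quote ``projective $=$ retract of free'' off the shelf. The repair is standard and does not change your architecture: either transfer projectivity, surjectivity and finite generation across the categorical equivalence to the variety of MV-algebras, where the free $n$-generated algebra corresponds to $(\McNn,1)$ by McNaughton's theorem (this is the route of \cite{CM2013}, and the paper's introduction itself notes that projectives are preserved by categorical equivalences); or argue directly from the universal property of $\McNn$ --- unital $\ell$-homomorphisms out of it correspond bijectively to $n$-tuples in the unit interval $[0,a]$ of the codomain --- which simultaneously yields the surjection $\pi$ (after replacing arbitrary generators of $G$ by generators lying in $[0,u]$, using that $u$ is a strong unit) and the projectivity of $\McNn$. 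Your second ``input'' already contains the substance of this universal property, so the error is one of justification rather than of strategy. A smaller remark: the ``crux'' you defer is lighter than you suggest, since for the endomorphism case one only needs that $\McNn$ is generated by the coordinate projections (so two homomorphisms agreeing on $x_1,\ldots,x_n$ agree everywhere, and $e(x_i)\in\McN(\cube)$ takes values in $[0,1]$ because $0\le x_i\le 1$ and $e$ is unital and order-preserving); the full polyhedral duality for arbitrary rational polyhedra is not required.
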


In Theorem~\ref{Thm:Main}, the main result of this paper, 
we present necessary and sufficient conditions 
for a rational polyhedron to be a $\Zed$-retract. 
The proof relies on the following three results:

\begin{theorem}[{Whitehead's Theorem--contractible case \cite[Theorem~4.5]{Hat2001}}]
\label{Thm:Whitehead}
Let $P\subseteq [0,1]^n$ be a polyhedron. 
If $P$ is contractible, then $P$ is a {\em deformation retract} of $[0,1]^n$, 
that is, there exists a retraction $f\colon [0,1]^n\to P$ 
homotopically equivalent to the identity on $[0,1]^n$ 
relative to $P$.  
\end{theorem}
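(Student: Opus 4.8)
The plan is to recognise this as the inclusion-of-a-subcomplex form of Whitehead's theorem, so that the hypothesis of contractibility is converted into the existence of a deformation retraction. The decisive input is that both $P$ and $\cube$ are contractible, and hence have vanishing homotopy groups in every dimension; the remainder is a matter of presenting the pair $(\cube,P)$ as a CW pair so that the strong conclusion of \cite[Theorem~4.5]{Hat2001} applies.

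First I would put a CW structure on the cube for which $P$ is a subcomplex. Starting from a triangulation $\Delta$ of $\cube$ (for instance the standard triangulation of Example~\ref{Ex:CollapsibleCube}) and applying Corollary~\ref{Cor:Triang-Subset1} with $Q=\cube$, I obtain a stellar subdivision $\nabla$ of $\Delta$ whose subcomplex $\nabla_P=\{S\in\nabla\mid S\subseteq P\}$ is a triangulation of $P$. Thus $P=|\nabla_P|$ is the support of a subcomplex of $\nabla$, and the underlying simplicial pair endows $(\cube,P)$ with the structure of a CW pair; in particular the inclusion $i\colon P\hookrightarrow\cube$ is the inclusion of a subcomplex and the pair has the homotopy extension property.

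Next I would verify the homotopy-theoretic hypothesis. The cube $\cube$ is convex, hence contractible, and $P$ is contractible by assumption, so $\pi_k(P)=\pi_k(\cube)=0$ for every $k\geq 0$. Consequently the homomorphisms $i_*\colon\pi_k(P)\to\pi_k(\cube)$ are isomorphisms in all dimensions, so $i$ is a weak homotopy equivalence between CW complexes. Applying Whitehead's theorem in its subcomplex form now yields not merely a homotopy inverse but a deformation retraction of $\cube$ onto $P$ fixing $P$ throughout; this is exactly a retraction $f\colon\cube\to P$ homotopic to the identity relative to $P$.

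I expect the genuine content to lie in the cited theorem rather than in this reduction: upgrading a weak equivalence to an honest deformation retract is the hard step, and it is carried out in \cite{Hat2001} by extending a homotopy cell by cell over the simplexes of $\nabla$ not contained in $P$, using the compression lemma together with the homotopy extension property at each stage. The part I actually have to supply---triangulating $\cube$ so that $P$ becomes a subcomplex, and observing that contractibility forces $i$ to be a weak equivalence---is routine given the simplicial machinery already developed.
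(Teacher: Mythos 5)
Your proposal is correct and is essentially the paper's own treatment: the paper gives no proof of this statement at all, citing it directly as \cite[Theorem~4.5]{Hat2001}, and your reduction --- using Corollary~\ref{Cor:Triang-Subset1} to triangulate $\cube$ so that $P$ becomes a subcomplex (hence $(\cube,P)$ a CW pair), then noting that contractibility of both spaces makes the inclusion a weak homotopy equivalence, so the subcomplex form of Whitehead's theorem yields a deformation retraction fixing $P$ --- is exactly the routine verification that this citation presupposes. There is no gap; as you say, the real content lies in the cited theorem.
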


\begin{theorem}[{Relative  Simplicial Approximation Theorem 
\cite[Theorem~3.2.1]{Sta1967}}]
\label{Thm:SimplicialApproximation}
Let $P\subseteq Q\subseteq \R^n$ and $R\subseteq \R^m$ 
be polyhedra 
and $f\colon Q\to R$ be a continuous map 
such that $f{\upharpoonright}_P$ is a piecewise linear. 
Then there exists a piecewise linear map  $g\colon Q\to R$  
 homotopically equivalent to $f$  such that 
$f$  and agrees with   $g$ on $P$ 
(in symbols,   $g{\upharpoonright}_P = f{\upharpoonright}_P$).
\end{theorem}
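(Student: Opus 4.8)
The statement is the classical relative simplicial approximation theorem, so the plan is to carry out the standard star-covering construction of Zeeman and Stallings while being careful about the relative clause $g{\upharpoonright}_P=f{\upharpoonright}_P$. First I would fix a triangulation $\Sigma$ of $R$ and, starting from any triangulation of $Q$ and applying Corollary~\ref{Cor:Triang-Subset1}, obtain a triangulation $\Delta$ of $Q$ in which $P$ appears as a subcomplex, that is, $\Delta_P=\{S\in\Delta\mid S\subseteq P\}$ triangulates $P$. Since $f{\upharpoonright}_P$ is piecewise linear, Lemma~\ref{Lem:BlowUpSubdivision} lets me pass to a common subdivision so that $f{\upharpoonright}_P$ is affine on each simplex of $\Delta_P$, and Corollary~\ref{Cor:RefineByBlowUps} applied to $f{\upharpoonright}_P\colon P\to R$ and $\Sigma$ lets me further assume that $f(S)$ lies in a single simplex of $\Sigma$ for every $S\in\Delta$ with $S\subseteq P$.

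The heart of the argument is the star condition. Writing $\mathrm{st}(v)$ for the open star of a vertex $v$, I would use uniform continuity of $f$ on the compact set $Q$ and a Lebesgue number argument for the open cover $\{f^{-1}(\mathrm{st}(w))\mid w\in\ver(\Sigma)\}$ of $Q$ to produce, by repeated barycentric subdivision of $\Delta$, a subdivision $\Delta'$ with the property that every vertex $v$ of $\Delta'$ admits a vertex $c(v)\in\ver(\Sigma)$ with $f(\mathrm{st}(v))\subseteq\mathrm{st}(c(v))$. These subdivisions can be performed so as not to alter the linear pieces of $f{\upharpoonright}_P$ already secured on $\Delta_P$.

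Next I would define the candidate map on vertices by $g(v)=f(v)$ when $v\in P$ and $g(v)=c(v)$ otherwise, extending affinely over each simplex of $\Delta'$. For a simplex $\conv(v_0,\dots,v_k)\in\Delta'$ disjoint from $P$ the usual star lemma applies: any interior point $x$ satisfies $f(x)\in\bigcap_i f(\mathrm{st}(v_i))\subseteq\bigcap_i\mathrm{st}(c(v_i))$, the nonempty intersection forcing $c(v_0),\dots,c(v_k)$ to span a simplex $T$ of $\Sigma$, so $g$ maps the simplex into $T\subseteq R$. The straight-line homotopy $H(x,t)=(1-t)f(x)+t\,g(x)$ then stays inside the convex simplex $T$, hence inside $R$, and restricts to the constant homotopy on $P$; this yields $g\simeq f$ rel $P$ with $g{\upharpoonright}_P=f{\upharpoonright}_P$.

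I expect the main obstacle to be the mixed simplexes straddling $P$, where $g$ interpolates between genuine vertices $c(v)$ of $\Sigma$ and the possibly non-vertex points $f(v)$ prescribed on $P$. To make $g$ land in a single simplex of $\Sigma$ on such a simplex---and thereby keep the straight-line homotopy inside $R$---one must control the subdivision in a simplicial neighbourhood (collar) of $P$, subdividing $Q$ finely enough there that $f$ carries a whole neighbourhood of each simplex of $\Delta_P$ into the star of the simplex of $\Sigma$ containing its image, all without disturbing $\Delta_P$ itself. Reconciling this relative bookkeeping with the global star condition of the previous step, so that the vertex values $f(v_i)$ and $c(v_j)$ are jointly co-simplicial in $\Sigma$, is the delicate point; the remainder is routine.
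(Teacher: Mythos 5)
The paper does not actually prove this statement---it is quoted verbatim from Stallings \cite[Theorem~3.2.1]{Sta1967} and used as a black box---so your attempt can only be judged on its own merits. On those merits it has a genuine gap, and you have located it yourself: the mixed simplexes, those meeting $P$ but not contained in it, are exactly the content of the \emph{relative} theorem (everything you do prove is the classical absolute approximation argument), and your final paragraph concedes rather than closes this point. Moreover, the remedy you suggest---subdividing ``finely enough'' in a collar of $P$---attacks the wrong obstruction. No mesh bound helps, because the difficulty is not metric: on a mixed simplex $S=\conv(v_0,\dots,v_k)$ with $v_0,\dots,v_j\in P$, your $g$ is the affine map interpolating the points $f(v_0),\dots,f(v_j)$, which are \emph{not} vertices of $\Sigma$, and the vertices $c(v_{j+1}),\dots,c(v_k)$; its image is the convex hull of all these points, and to keep it inside $R$ you need them to lie in one \emph{single} simplex of $\Sigma$. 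Openness of stars and a Lebesgue number give each point separately a carrier adjacent to $T_0=\conv(c(v_0),\dots,c(v_k))$, but the closed star of $T_0$ is not convex, so this does not suffice. What is actually needed is a carrier-type argument: e.g., pick $p$ in the relative interior of $S\cap P$, take $x_n$ in the relative interior of $S$ with $x_n\to p$ and $\mathrm{carrier}(f(x_n))$ constant, say $T'$; then $T'\supseteq T_0$, and in the limit $\mathrm{carrier}(f(p))$ is a face of $T'$, and since $f(S\cap P)$ lies in a single simplex of $\Sigma$ (your pre-processing) and is affine, $f(S\cap P)\subseteq\mathrm{carrier}(f(p))\subseteq T'$; hence $g(S)\subseteq\conv\bigl(f(S\cap P)\cup T_0\bigr)\subseteq T'\subseteq R$. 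This lemma, or alternatively the regular-neighbourhood/mapping-cylinder interpolation used in the literature, is the missing idea; without it there is no proof.

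A second, related flaw: your claimed homotopy $H(x,t)=(1-t)f(x)+t\,g(x)$ rel $P$ is only justified on simplexes disjoint from $P$. On a mixed simplex, $f(x)$ and $g(x)$ need not share a carrier ($f(x)$ can sit in a simplex of the closed star of $T_0$ different from the $T'$ above), so the straight segment can leave $R$. A correct route is to interpose the absolute approximation $g_0$ (all vertices sent to $c(v)$): then $f\simeq g_0$ by the standard star argument, and $g_0\simeq g$ by a straight-line homotopy, since for $x\in S$ both $g_0(x)\in T_0\subseteq T'$ and $g(x)\in T'$ lie in the common simplex $T'$. This yields $f\simeq g$, though not rel $P$; if the rel-$P$ homotopy is wanted (the statement as quoted does not require it), still more care is needed.
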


\begin{theorem}[{\cite[Theorem~5.1]{CM2013}}]
    \label{theorem:collapsible}
Let $P\subseteq [0,1]^{n}$ be a   polyhedron. 
Assume
\begin{itemize}
\item[(i)] $P$ has a collapsible  triangulation $\nabla$,
\item[(ii)] $P$ contains a vertex $v$ of $\cube$, and
\item[(iii)] $P$ has a strongly regular triangulation  $\Delta$.
\end{itemize}
Then  $P$ is a $\Zed$-retract.
\end{theorem}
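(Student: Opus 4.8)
The plan is to obtain the $\Zed$-retraction in three stages: first a continuous retraction of the cube onto $P$, then a piecewise linear one, and finally a $\Zed$-map. The whole arithmetic difficulty is concentrated in the last stage, where hypotheses (ii) and (iii) are used.

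\emph{From collapsibility to a piecewise linear retraction.} Hypothesis (i) makes $P$ collapsible, and a collapse is in particular a deformation retraction onto a single vertex, so $P$ is contractible. Whitehead's Theorem~\ref{Thm:Whitehead} then exhibits $P$ as a deformation retract of $[0,1]^n$; in particular there is a continuous retraction $f\colon[0,1]^n\to P$ with $f{\upharpoonright}_P=\mathrm{id}_P$. Since $P$ is a polyhedron its identity map is piecewise linear, so the Relative Simplicial Approximation Theorem~\ref{Thm:SimplicialApproximation}, applied with $Q=[0,1]^n$ and $R=P$, replaces $f$ by a piecewise linear map $g\colon[0,1]^n\to P$ that still agrees with $f$ on $P$, i.e. $g{\upharpoonright}_P=\mathrm{id}_P$. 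Thus $g$ is a piecewise linear retraction, and it only remains to deform it into one all of whose linear pieces have integer coefficients. Note that in this route collapsibility is used only through contractibility; the finer combinatorial data of the collapse become superfluous once Theorems~\ref{Thm:Whitehead} and~\ref{Thm:SimplicialApproximation} are available.

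\emph{Setting up compatible regular triangulations.} Fix a strongly regular triangulation $\Delta$ of $P$, which exists by (iii). Starting from a triangulation of $[0,1]^n$ compatible with $g$ and applying Corollary~\ref{Cor:RefineByBlowUps}, I would subdivide to a triangulation $\Sigma$ of the cube such that each simplex of $\Sigma$ is carried by $g$ into a simplex of $\Delta$, and such that the simplexes of $\Sigma$ contained in $P$ triangulate $P$ (so that $g{\upharpoonright}_P=\mathrm{id}_P$ is $\Sigma$-compatible). Using Corollary~\ref{cor:Beynon} to rationalise and the affine desingularisation behind Lemma~\ref{lem:RegularFromRational}, I may further arrange that $\Sigma$ is a \emph{regular} triangulation of the cube inducing a regular triangulation on $P$; by Lemma~\ref{Lem:StronglyRegStable} every such regular triangulation is automatically strongly regular, since $P$ and $[0,1]^n$ both admit strongly regular triangulations. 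This is the arena in which the arithmetic adjustment is carried out.

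\emph{The $\Zed$-enhancement of the rationalisation.} The final step is a denominator-controlled version of Lemma~\ref{Lem:RationalizationOfMap}. For a vertex $u$ of $\Sigma$ lying in $P$ keep $g'(u)=u$; for $u\notin P$ let $T_u\in\Delta$ be the minimal simplex containing $g(u)$, and choose a rational point $g'(u)\in T_u$ with $\den(g'(u))\mid\den(u)$, extending $g'$ affinely over each simplex. Exactly as in Lemma~\ref{Lem:RationalizationOfMap}, minimality of $T_u$ forces $g'(S)\subseteq P$ for every $S\in\Sigma$ and $g'{\upharpoonright}_P=\mathrm{id}_P$; the divisibility condition together with regularity of $\Sigma$ then yields, through Lemma~\ref{Lem-LinearMap}, that $g'$ is a $\Zed$-map, hence the desired $\Zed$-retraction. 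I expect the coherent realisation of the points $g'(u)$ to be the main obstacle, precisely because $\den(g'(u))$ must \emph{divide} $\den(u)$: the placement of the source vertices $u$ of $\Sigma$ must be coordinated with that of their targets. Here strong regularity does the work: since $\Delta$ is strongly regular each $T_u$ contains rational points of small prescribed denominator by Lemma~\ref{Lem:ExistsCoprime}, and since $[0,1]^n$ is itself strongly regular, further stellar subdivisions relocate the vertices $u$ to rational points whose denominators are divisible by the chosen $\den(g'(u))$, again by Lemma~\ref{Lem:ExistsCoprime}. Finally, hypothesis (ii) guarantees that $P$ contains an integer point (denominator $1$) — a genuinely necessary condition, since a $\Zed$-retraction must send the integer vertices of the cube into $P$ — and this integer point anchors the denominator matching at the base of the construction.
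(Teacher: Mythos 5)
Your first two stages are sound, and in fact they coincide with Steps (A)--(G) of the paper's proof of Theorem~\ref{Thm:Main} (note that the paper itself does not prove the present statement: it imports it from \cite{CM2013} and uses it as the key black box in that proof). The breakdown is in your third stage, the ``denominator-controlled version of Lemma~\ref{Lem:RationalizationOfMap}'', which is exactly where the whole difficulty of the theorem is concentrated. Your construction requires, for each vertex $u$ of $\Sigma$ outside $P$, a rational point $g'(u)$ in the \emph{minimal} simplex $T_u\in\Delta$ containing $g(u)$, with $\den(g'(u))\mid\den(u)$ (minimality of $T_u$ is what makes the argument of Lemma~\ref{Lem:RationalizationOfMap} give $g'(S)\subseteq P$, so you cannot choose $g'(u)$ elsewhere). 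Now take $u$ to be a corner of the cube: corners are vertices of \emph{every} triangulation of $[0,1]^n$, they have denominator $1$, and no subdivision can move or remove them. For such $u$ you need an \emph{integer} point inside $T_u$, and a simplex of a strongly regular triangulation need not contain one: for instance $S=\conv((1/2,1/2),(1/3,2/3))$ is strongly regular (the homogeneous correspondents $(1,1,2)$ and $(1,2,3)$ extend to a basis of $\Zed^3$, and $\gcd(2,3)=1$) yet contains no integer point, and nothing prevents the piecewise linear retraction $g$ from sending a corner of $[0,1]^2$ into the relative interior of such a simplex of $\Delta$. Your two appeals to Lemma~\ref{Lem:ExistsCoprime} cannot repair this: that lemma produces points whose denominator is \emph{coprime} to a prescribed integer, not points of prescribed denominator, and certainly not points whose denominator divides a given one. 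Likewise, ``relocating the vertices $u$ by further stellar subdivisions'' is not an available operation: subdivision adds vertices, it never moves existing ones.

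The gap is structural, and your own remark that ``collapsibility is used only through contractibility'' is the red flag: if your argument were correct, it would prove the strictly stronger Theorem~\ref{Thm:Main}, (b)$\Rightarrow$(a), directly, making the present theorem superfluous. What the paper actually does at the point where you impose divisibility is to settle for what Lemma~\ref{Lem:ExistsCoprime} can genuinely deliver, namely condition (h): the denominators of the images of the vertices of each maximal simplex have greatest common divisor $1$, arranged by stellar subdivisions at interior points of the offending simplexes. The divisibility problem is then solved not inside $[0,1]^n$ at all, but by passing to the geometric realisation $Q\subseteq[0,1]^k$ of a weighted abstract simplicial complex, in which each vertex $v_i$ is repositioned at $e_i/\den(\eta(v_i))$ so that source and target denominators agree \emph{by construction} (Lemma~\ref{Lem-LinearMap} then applies with equality rather than divisibility), and by applying Theorem~\ref{theorem:collapsible} to $Q$, whose triangulation is collapsible precisely because it is simplicially isomorphic to the collapsible $\Delta$. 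So collapsibility is the load-bearing hypothesis --- in \cite{CM2013} the $\Zed$-retraction is built by exploiting the sequence of elementary collapses --- and the step you hoped to finesse by vertex-by-vertex rationalisation inside the original cube is exactly the step this theorem exists to carry out.
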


We are now ready to prove the main result of this paper.

\begin{theorem}\label{Thm:Main}
Let $P\subseteq \cube$ be a rational  polyhedron. 
Then the following conditions are equivalent:
\begin{itemize}

  \item[(a)] $P$ is a $\Zed$-retract.
  
  \item[(b)] $P$ has the following properties:
    \begin{itemize}
        \item[(i)] $P$ is contractible,
        \item[(ii)] $P\cap\{0,1\}^n\neq\emptyset$, and
        \item[(iii)] $P$ has a strongly regular triangulation.
     \end{itemize}

\end{itemize}

\end{theorem}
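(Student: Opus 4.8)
The plan is to prove the two implications separately, treating (a)$\Rightarrow$(b) as the routine direction and concentrating the real work on (b)$\Rightarrow$(a).

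For (a)$\Rightarrow$(b), suppose $P$ is a $\Zed$-retract, fix a $\Zed$-retraction $\eta\colon\cube\to P$ with $\eta{\upharpoonright}_P=\mathrm{id}_P$, and let $\Sigma$ be a regular triangulation of $\cube$ compatible with $\eta$. Property (i) is immediate: $\cube$ is convex, hence contractible, and a retract of a contractible space is contractible. For (ii) I would note that on each simplex of $\Sigma$ the map $\eta$ agrees with an affine map $x\mapsto Mx+b$ with $M,b$ integral, so the image $\eta(c)$ of any vertex $c\in\{0,1\}^n$ of the cube is an integer point lying in $P\subseteq\cube$, i.e.\ a point of $P\cap\{0,1\}^n$. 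For (iii) I would verify condition (ii) of Lemma~\ref{Lem:SRandConvexSet}: given a rational $v\in P$, pick a cube vertex $c$ and a simplex $S\in\Sigma$ with $v\in S$ into which the segment $[v,c]$ enters, so that $\eta{\upharpoonright}_S(x)=Mx+b$; since $\eta(v)=v$ one computes $\eta(v+s(c-v))=v+s(w-v)$ with $w=Mc+b\in\Zed^n$, and this value lies in $P$ for all small $s$, giving $\conv(v,v+\varepsilon(w-v))\subseteq P$. Thus $P$ has a strongly regular triangulation.

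The substance of the theorem is (b)$\Rightarrow$(a). First I would invoke Whitehead's Theorem~\ref{Thm:Whitehead}: since $P$ is contractible it is a deformation retract of $\cube$, so there is a continuous retraction $f\colon\cube\to P$. Applying the Relative Simplicial Approximation Theorem~\ref{Thm:SimplicialApproximation} with $Q=\cube$, $R=P$ and the subpolyhedron $P$ (on which $f$ is the identity, hence piecewise linear), I obtain a piecewise linear retraction $g\colon\cube\to P$ with $g{\upharpoonright}_P=\mathrm{id}_P$. The goal is to upgrade $g$ to a $\Zed$-retraction; the difficulty is that one cannot simply round the vertex values of $g$ to rationals with the correct denominators, because the denominator of a prescribed image in $P$ need not divide the denominator of a domain vertex, so Lemma~\ref{Lem-LinearMap} cannot be forced locally.

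To get around this I would reduce to Theorem~\ref{theorem:collapsible}, which already manufactures $\Zed$-retracts from collapsible, strongly regular polyhedra containing a cube vertex. The plan is to build, inside some $[0,1]^m$, a companion polyhedron $Q^\ast$ with $P\cong P'\subseteq Q^\ast$ that is simultaneously \emph{collapsible} and collapses to $P'$. A first candidate is the mapping cylinder of $g$, realised by the embedding $(x,t)\mapsto((1-t)x,\,t\,g(x),\,t)$ of $\cube\times[0,1]$ into $[0,1]^{2n+1}$: it collapses to its top copy $P'$ of $P$ and contains the bottom cube, hence a vertex of $[0,1]^{2n+1}$. To also make $Q^\ast$ collapsible (which the bare cylinder need not be) I would pass to a regular neighborhood of $P$, which still collapses to $P$ and can be taken to be a PL ball. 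Using Corollary~\ref{Cor:Triang-Subset1}, Corollary~\ref{cor:Beynon} and Lemma~\ref{lem:RegularFromRational} I would arrange that $Q^\ast$, the induced triangulation of $P'$, and every intermediate complex of the collapse $Q^\ast\searrow P'$ are strongly regular (this is where hypothesis (iii) is consumed). Then Theorem~\ref{theorem:collapsible} makes $Q^\ast$ a $\Zed$-retract of $[0,1]^m$, while the collapse $Q^\ast\searrow P'$, performed one elementary collapse at a time with denominators controlled by strong regularity (the very mechanism behind Theorem~\ref{theorem:collapsible} and Lemma~\ref{Lem:ExistsCoprime}), yields a $\Zed$-retraction $Q^\ast\to P'$. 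Composing the two $\Zed$-retractions and transporting back along $P\cong P'$, Lemma~\ref{Lem:Independence} shows that $P$ is a $\Zed$-retract.

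I expect the genuine obstacle to be the construction of the companion $Q^\ast$. Contractibility of $P$ does \emph{not} yield a collapsible triangulation of $P$ itself (a contractible polyhedron, such as the dunce hat, need not be collapsible), so Theorem~\ref{theorem:collapsible} cannot be applied to $P$ directly; one must genuinely manufacture a collapsible thickening that still collapses onto $P$. This is exactly where the deformation retraction of Whitehead's theorem is indispensable -- it is the homotopy, and not merely the map $g$, that lets the collapsibility of the cube be transferred to $Q^\ast$ -- and it is the step that must be carried out with every intermediate complex strongly regular, so that the final collapse onto $P$ can be realised by $\Zed$-maps rather than by arbitrary piecewise linear ones.
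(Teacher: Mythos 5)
Your direction (a)$\Rightarrow$(b) is correct, and it is in fact more self-contained than the paper's treatment (the paper simply combines Theorems~\ref{Thm:ToInvert} and~\ref{Thm_Projective_Retraction}): a retract of the cube is contractible, a $\Zed$-map sends cube vertices to integer points of $P$, and your computation $\eta(v+s(c-v))=v+s(w-v)$ with $w=Mc+b\in\Zed^n$ verifies condition (ii) of Lemma~\ref{Lem:SRandConvexSet}.

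The direction (b)$\Rightarrow$(a), however, has a genuine gap at its crux. Your entire plan rests on the claim that the collapse $Q^\ast\searrow P'$, ``performed one elementary collapse at a time with denominators controlled by strong regularity,'' yields a $\Zed$-retraction $Q^\ast\to P'$. No such relative statement is available: Theorem~\ref{theorem:collapsible} is quoted as a black box from \cite{CM2013} and gives only the absolute case (a $\Zed$-retraction of the whole cube onto a collapsible, strongly regular polyhedron containing a lattice point); its internal ``mechanism'' cannot legitimately be invoked, and the relative statement is delicate. Indeed, an elementary collapse does not in general induce a $\Zed$-retraction of supports: the complex $\{\conv(0,\tfrac12),\{0\},\{\tfrac12\}\}$ is a strongly regular, collapsible triangulation of $[0,\tfrac12]\subseteq[0,1]$ collapsing onto the vertex $\tfrac12$, yet there is no $\Zed$-retraction of $[0,\tfrac12]$ onto $\{\tfrac12\}$, since by Lemma~\ref{Lem-LinearMap} a $\Zed$-map must send $0$ to an integer point. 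This also shows that complexes occurring along a collapse lose strong regularity (simplexes change maximality status), so your requirement that ``every intermediate complex be strongly regular'' is precisely the hard gcd bookkeeping, and your outline neither arranges it nor proves that it suffices. There are also errors in the construction of $Q^\ast$ itself: the straight-line mapping cylinder $(x,t)\mapsto((1-t)x,\,tg(x),\,t)$ is bilinear and its image is in general not a polyhedron (already for $g=\mathrm{id}$ on $[0,1]$ the image lies on the quadric $v=t(u+v)$ and is not a finite union of simplexes); and a regular neighbourhood of a merely contractible polyhedron need not be a PL ball without codimension-$\geq 3$ and high-dimension hypotheses (a full-dimensional contractible $P$ is essentially its own regular neighbourhood, and need not be collapsible).

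The idea your proposal is missing is how the paper sidesteps all relative collapse machinery. Rather than thickening $P$, the paper transfers the collapsible triangulation $\Delta$ of the cube itself. It first arranges (Steps (A)--(H)) that the PL retraction $\eta$ takes rational values on $\ver(\Delta)$, is the identity on $P$, and satisfies $\gcd\{\den(\eta(v))\mid v\in\ver(S)\}=1$ for every $n$-simplex $S$ of $\Delta$; this is where hypothesis (iii) is consumed, via Lemmas~\ref{Lem:StronglyRegStable} and~\ref{Lem:ExistsCoprime}. It then forms the weighted abstract simplicial complex on the skeleton of $\Delta$ with weights $\omega(v)=\den(\eta(v))$. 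Its geometric realisation $Q\subseteq[0,1]^k$ is simplicially isomorphic to $\Delta$, hence collapsible; it is strongly regular by the gcd condition; and it contains a vertex of $[0,1]^k$ by hypothesis (ii). Theorem~\ref{theorem:collapsible} now applies to $Q$ (not to $P$ or to a thickening of $P$), and the $\Zed$-maps $\mu$ and $\xi$ determined by $\mu(e_i/\omega(v_i))=\eta(v_i)$ and $\xi(v_i)=e_i/\omega(v_i)$ satisfy $\mu\circ\xi=\mathrm{id}_P$ by the retraction property of $\eta$, so Lemma~\ref{Lem:Independence} concludes that $P$ is a $\Zed$-retract. In short: the denominator difficulty you correctly identified is resolved not by controlling collapses relative to $P$, but by building a new domain whose vertex denominators are, by construction, exactly those that make $\mu$ a $\Zed$-map.
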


\begin{proof}
Theorems~\ref{Thm:ToInvert} and \ref{Thm_Projective_Retraction} 
take care of  (a)$\Rightarrow$(b).
\medskip

(b)$\Rightarrow$(a). 
Assume that $P$ satisfies (i)--(iii).
We divide the proof into two parts.
The first part is devoted to contructing a map $\eta\colon\cube\to P$ 
and a  (not necessarily rational) triangulation $\Delta$ of $\cube$ 
satisfying the following conditions:
\begin{itemize}
\item[(a)] $\eta$ is a continuous retract  onto $P$;
\item[(b)] $\eta$ is piecewise linear;
\item[(c)] $\Delta$ is collapsible;
\item[(d)] $\eta$ and $\Delta$ are compatible;
\item[(e)] the simplicial complex $\Delta_P=\{S\in\Delta\mid S\subseteq P\}$ is a triangulation of $P$;
\item[(f)]  $\Delta_P$ is rational and regular;
\item[(g)] $\eta(v)$ is a rational point for each $v\in\ver(\Delta)$;
\item[(h)] for each $n$-simplex $\conv( v_0,\ldots,v_n )\in\Delta$,\  \[\gcd(\den(\eta(v_0),\ldots,\den(\eta(v_n)))=1.\] 
\end{itemize} 
In the second part, using $\eta$ and $\Delta$, 
we will prove that $P$ is a $\Zed$-retract.
\medskip

\noindent{\bf Part 1:}
The map $\eta$ and the triangulation $\Delta$, satisfying (a)--(h) above, are obtained 
in a series of steps.
Each step provides a  map and/or a triangulation 
that increasingly satisfy the conditions listed above. 
We label the steps with the capitalised letter 
of the label of the property that the new map and/or 
the triangulation obtained will have--in addition to the ones 
already satisfied by the previous maps and triangulations. 
\medskip

\noindent{\it Step} (A): 
By Theorem~\ref{Thm:Whitehead}, 
there exists a continuous retraction $\eta_a\colon[0,1]^n\to P$. 

\medskip

\noindent{\it Step} (B):
An application of the Simplicial Approximation Theorem 
(Theorem~\ref{Thm:SimplicialApproximation}) 
provides a continuous piecewise linear map 
$\eta_b\colon[0,1]^n\to P$ such that 
$\eta_b{\upharpoonright}_{P}=\eta_a{\upharpoonright}_{P}$. 
Therefore, $\eta_b$ is also a continuous retraction
 of $[0,1]^n$ onto $P$.
\medskip

\noindent{\it Step} (C):
Let $\Delta_c$  be the  
standard triangulation of  $\cube$  
in Example~\ref{Ex:CollapsibleCube}.
(Any other collapsible triangulation of $\cube$ would do.)
\medskip

\noindent{\it Step} (D): 
Let $\Sigma$ be a triangulation of $[0,1]^n$ such that 
$\eta_b{\upharpoonright}_{S}$ is linear for each $S\in\Sigma$.
Lemma~\ref{Lem:BlowUpSubdivision} yields a stellar subdivision  $\Delta_d$ of $\Delta_c$ 
such that $\Delta_d$ is also a subdivision of $\Sigma$. 
Therefore, $\eta_b$ is linear on each simplex of $\Delta_d$, 
and by Lemma~\ref{Lem:BlowUpsPreserveCollapsible}, 
$\Delta_d$ is collapsible.
\medskip

\noindent{\it Step} (E):
Since $P\subseteq\cube$ is a rational polyhedron, 
Corollary~\ref{Cor:Triang-Subset1} yields a stellar subdivision 
$\Delta_e$ of  $\Delta_d$ such that the simplicial complex 
$(\Delta_e)_P=\{S\in\Delta_e\mid S\subseteq P\}$ 
is a triangulation of $P$. 
Clearly  $\eta_b$ and $\Delta_e$ are compatible, 
and by Lemma~\ref{Lem:BlowUpsPreserveCollapsible},
 $\Delta_e$ is collapsible since $\Delta_d$ is.
\medskip

\noindent{\it Step} (F):
An application of Corollary~\ref{cor:Beynon} to  $\Delta_e$ 
provides a triangulation $\Delta_e'$  of $\cube$ 
that is simplicially isomorphic to $\Delta_e$ 
and such that the simplicial complex 
$(\Delta_e')_P=\{S\in\Delta_e'\mid S\subseteq P\}$ 
is a rational triangulation of $P$. 
Now Lemma~\ref{lem:RegularFromRational} 
gives  a stellar subdivision $\Delta_f$  of $\Delta_e'$ 
such that the simplicial complex 
$(\Delta_f)_P=\{S\in\Delta_f\mid S\subseteq P\}$ 
is a regular triangulation of~$P$. 
Since $\Delta_e$ is collapsible 
and  $\Delta_e'$ is simplicially isomorphic to $\Delta_e$, 
the triangulation  $\Delta_e'$ of $\cube$ is collapsible, 
whence so is its stellar subdivision $\Delta_f$.
\medskip

\noindent{\it Step} (G):
Let $\eta_f\colon\cube\to P$ be the unique piecewise linear map 
compatible with $\Delta_f$ such that 
\begin{equation}\label{Eq:Etaf}
  \eta_f(v)=
    \begin{cases}
	 v&\mbox{if }v\in \ver(\Delta_f)\cap P,\\
	 \eta_b(v) & \mbox{if }v\in \ver(\Delta_f)\setminus P.
    \end{cases}
\end{equation}
Corollary~\ref{Cor:RefineByBlowUps} 
yields a stellar subdivision $\Delta_g$ of 
$\Delta_f$ satisfying the following conditions:
\begin{itemize}
\item[(G-i)]  for each ${S\in\Delta_g}$ there is
$T\in(\Delta_f)_P$  such that $\eta_f(S)\subseteq T$;
\item[(G-ii)] if $S\in\Delta_f$ 
is such that there exists 
$T\in(\Delta_f)_{P}$ with $\eta(S)\subseteq T$, 
then $S\in\Delta_g$.
\end{itemize}
Let $S\in (\Delta_f)_P$. Since $\eta_f(S)=S$,
 from (G-ii) it follows that 
$S\in(\Delta_g)_P=\{T\in\Delta_g\mid T\subseteq P\}$. 
Therefore,
\begin{equation}\label{Eq:DeltafDeltag}
   (\Delta_f)_P=(\Delta_g)_P.
\end{equation}
Since $\Delta_f$ is a stellar subdivision of $\Delta_g$, 
the map $\eta_f$ is compatible with $\Delta_g$. 
An application of Lemma~\ref{Lem:RationalizationOfMap}
provides a map $\eta_g\colon\cube\to P$ such that
\begin{itemize}
  \item[(G-iii)] for each $v\in\ver(\Delta_g)$
    the point $\eta_g(v)$ is  rational, and
  \item[(G-iv)] for each $v\in\ver(\Delta_g)$, 
    if $\eta_f(v)$ is a rational point then $\eta_g(v)=\eta_f(v)$.
\end{itemize}
Since $(\Delta_f)_P$ is a rational triangulation of $P$, 
each vertex of $(\Delta_f)_P$ is a rational point. 
Thus from condition (G-iv) and \eqref{Eq:Etaf} it follows that
\begin{equation}\label{Eq:Retract}
  \eta_g(v)=\eta_f(v)=v\,\,\mbox{ for each }v\in P.    
\end{equation}
By Lemma~\ref{Lem:BlowUpsPreserveCollapsible}, 
$\Delta_g$ is collapsible since $\Delta_f$ is.
\medskip

\noindent{\it Step} (H):
Since $\Delta_g$ is a triangulation of $[0,1]^n$,
 a simplex in $\Delta_g$ is maximal if and only if it is an $n$-simplex.
Let $S_1,\ldots,S_k$ be the list of all $n$-simplexes of 
$\Delta_g$ 
 $\gcd(\{\den(\eta(v))\mid v\in\ver(S)\})\neq 1$.  
For each $i=1,\ldots k$, 
let $v_i$ be an arbitrary point in the relative interior of $S_i$, 
that is, $v_i$ belongs to $S_i$ 
but  does not belong to any proper face of $S_i$ 
(for instance, we may let each $v_i$ be the barycentre of~$S_i$). 
Set $\nabla_0=\Delta_g$ and $\nabla_i=(\nabla_{i-1})_{v_i}$ 
for each $i=1,\ldots,k$. 
Let $\Delta_h=\nabla_k$. 
Since $\Delta_g$ is collapsible and $\Delta_h$ is a stellar 
subdivision of $\Delta_g$, 
it follows from Lemma~\ref{Lem:BlowUpsPreserveCollapsible} that 
$\Delta_h$ is collapsible.
 By (iii), $P$ admits a strongly regular triangulation.
Since $(\Delta_g)_P$ is a regular, 
Lemma~\ref{Lem:StronglyRegStable} shows that 
$(\Delta_g)_P$ is strongly regular. 
Therefore, $\{S_1,\ldots,S_k\}\cap(\Delta_g)_P=\emptyset$, 
and hence
\begin{equation}\label{Eq:DeltahDeltag}
  (\Delta_h)_P=(\Delta_g)_P.
\end{equation}
For each $i=1,\ldots,k$, 
let $T_i\in(\Delta_g)_P=(\Delta_h)_P$ be such that 
$\eta_g(S_i)\subseteq T_i$ and $T_i$ is maximal in $(\Delta_h)_P$. 
By Lemma~\ref{Lem:ExistsCoprime}, 
for each $i=1,\ldots,k$, there exists a rational point $x_i\in T_i$ such that $\den(x_i)$ and $\den(\eta(v))$ are coprime
for each $v\in\ver(S_i)$.
Let $\eta_h$ be the unique piecewise linear  map compatible
 with $\Delta_h$ such that 
\begin{equation}\label{Eq:DefEta}
  \eta_h(v)=
\begin{cases}
   \eta_g(v)&\mbox{if } v\in\ver(\Delta_g), \\ 
   x_i&\mbox{if }v=v_i\mbox{ for some } i=1,\ldots,k. 
\end{cases}
\end{equation}
Then $\eta_g$ satisfies (h).
\bigskip

To conclude the first part of the proof let us set 
 $\eta=\eta_h$ and $\Delta=\Delta_h$. 
From Step~(H),
 it follows directly that $\eta$ and $\Delta_h$
 satisfy conditions (b)-(d) and (h). 
Since $(\Delta_f)_P$ is a regular triangulation of $P$, 
from \eqref{Eq:DeltafDeltag} and \eqref{Eq:DeltahDeltag} 
it follows that $\Delta$ satisfies (e) and (f). 
From {(G-i)} and \eqref{Eq:DefEta} it follows that $\eta$ satisfies (g). 
By definition, $\eta$ is continuous. 
Equations \eqref{Eq:Retract} and \eqref{Eq:DefEta} show that 
$\eta(v)=v$ for each $v\in\ver(\Delta_P)$. 
Since $\Delta_P$ is a triangulation of~$P$ 
and $\eta$ is compatible with $\Delta$, 
we obtain that  $\eta(v)=v$ for each $v\in P$, 
thus proving that $\eta$ satisfies (a). 

Therefore, we have proved the existence of 
$\eta$ and $\Delta$ satisfying conditions (a)--(h). 

\bigskip

\noindent{\bf Part 2:} 
Let the abstract simplicial complex $(\ver(\Delta),\Sigma_{\Delta})$ 
be the skeleton of~$\Delta$ (see Section~\ref{Subsec:AbsSimCom}). 
Let  $\mathfrak{W}=(\ver(\Delta),\Sigma_{\Delta},\omega)$  
be the weighted abstract simplicial complex such that   
\begin{equation}\label{Eq:omega}
  \omega(v)=\den(\eta(v))\quad \mbox{ for each } v\in\ver(\Delta).
\end{equation}
Let $v_1,\ldots,v_k$ be the vertices of $\Delta$,
and  $Q=|\Delta_{\mathfrak{W}}|\subseteq [0,1]^k$ 
the geometric  realisation of $\mathfrak{W}$. 
By (ii), there exists $v\in P$ such that $\den(v)=1$. 
Since $\Delta_P$ is a regular triangulation of $P$,\, 
$v=v_i\in\ver(\Delta_P)\subseteq\ver(\Delta)$ for some 
$i\in\{1,\ldots,k\}$ and $\omega(v_i)=\den(\eta(v))=\den(v)=1$. 
Thus $e_i\in Q\cap \{0,1\}^k$. 
Since $\Delta$ is a triangulation of $[0,1]^n$, 
a simplex $S$ is maximal in $\Delta$ if and only if it is an $n$-simplex. 
As a consequence of~(h), for each $S$ maximal in $\Delta$,
we have $\gcd(\{\omega(v)\mid v\in\ver(S)\})=1$.
Thus $\Delta_{\mathfrak{W}}$ 
is a strongly regular triangulation of $Q$. 
Hence, Theorem~\ref{theorem:collapsible} 
implies that $Q\subseteq [0,1]^k$ is a $\Zed$-retract. 
Let $\nu\colon [0,1]^k\to Q$  be a $\Zed$-map 
such that $\nu(v)=v$ for each $v\in Q$. 

Now, 
let  $\mu\colon Q\to P$ be the unique piecewise linear map 
compatible with $\Delta_{\mathfrak{W}}$ such that
\begin{equation}\label{Eq:mu}
\mu(e_i/\omega(v_i))=\eta(v_i)\mbox{ for each }v_i\in\ver(\Delta).
\end{equation} 
By \eqref{Eq:omega} and Lemma~\ref{Lem-LinearMap}, 
\ $\mu$ is a $\Zed$-map.
Similarly, 
let $\xi\colon P\to Q$  be the unique piecewise linear map 
compatible with $\Delta_P$ that satisfies
\begin{equation}\label{Eq:xi}
   \xi(v_i)=e_i/\omega(v_i)\mbox{ for each }   
v_i\in\ver(\Delta_P)=\ver(\Delta)\cap P.
\end{equation} 
By \eqref{Eq:omega}, 
another application of Lemma~\ref{Lem-LinearMap}
shows that $\xi$ is a $\Zed$-map. 

Clearly, $\mu\circ\xi$ is compatible with $\Delta_P$.
From \eqref{Eq:mu} and \eqref{Eq:xi}, it follows
$\mu\circ\xi(v_i)=\mu(e_i/\omega(v_i))=\eta(v_i)=v_i$ 
for each  $v_i\in\ver(\Delta_P)$. 
Therefore, $(\mu\circ\xi)(v)=v$ for each $v\in P$.

In conclusion,  the $\Zed$-maps $\mu\circ\nu\colon[0,1]^k\to P$ 
and $\xi\colon P\to Q\subseteq [0,1]^k$ 
satisfy $(\mu\circ\nu)\circ\xi(v)=v$ for each $v\in P$. 
By Lemma~\ref{Lem:Independence},
$P$ is a $\Zed$-retract, and the proof is complete.
\end{proof}

Combining  Lemma~\ref{Lem:SRandConvexSet} with
Theorems~\ref{Thm_Projective_Retraction}, and \ref{Thm:Main} 
we obtain the following:
\begin{theorem}\label{cor:last}
A unital $\ell$-group  $(G,u)$ is finitely generated and projective 
if and only if
there exist $n\in\{1,2,,\ldots\}$ and a rational  polyhedron
$P\subseteq \cube$ such that $(G,u)\cong\McN(P)$ and
\begin{itemize}
\item[(i)] $P$ is contractible,

\item[(ii)] $P\cap\{0,1\}^n\neq\emptyset$, and

\item[(iii)] 
for each rational point $v\in P$ 
 there exist $w\in \Zed^n$ and $\varepsilon>0$ such that the convex segment $\conv(v,v+\varepsilon (w-v))$ is contained in~$P$.
\end{itemize}
\end{theorem}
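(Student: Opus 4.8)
The plan is to chain together the three cited results, since each disposes of one clause of the biconditional. First I would invoke Theorem~\ref{Thm_Projective_Retraction} to translate the algebraic hypothesis into a purely geometric one: $(G,u)$ is finitely generated and projective if and only if there exist $n\in\{1,2,\ldots\}$ and a rational polyhedron $P\subseteq\cube$ with $(G,u)\cong\McN(P)$ and $P$ a $\Zed$-retract. This settles the entire ``$(G,u)\cong\McN(P)$'' portion of the statement and reduces the problem to characterising $\Zed$-retracts intrinsically, in terms of conditions on $P$ alone.

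Next I would apply Theorem~\ref{Thm:Main} to the rational polyhedron $P$. That theorem asserts that $P$ is a $\Zed$-retract precisely when it is contractible, meets $\{0,1\}^n$, and admits a strongly regular triangulation. The first two of these coincide verbatim with conditions (i) and (ii) of the present statement, so nothing further is required for them; only the third condition needs to be reformulated.

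Finally, to obtain condition (iii) in the stated form, I would replace ``$P$ admits a strongly regular triangulation'' by the convex-segment property using the equivalence of Lemma~\ref{Lem:SRandConvexSet}: for a rational polyhedron, the existence of a strongly regular triangulation is equivalent to the requirement that every rational point $v\in P$ admit $w\in\Zed^n$ and $\varepsilon>0$ with $\conv(v,v+\varepsilon(w-v))\subseteq P$. Substituting this equivalence into the characterisation furnished by Theorem~\ref{Thm:Main} yields exactly conditions (i)--(iii) of the theorem, and since every link in the chain is a biconditional, both directions follow simultaneously.

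Because the result is a direct corollary of the three preceding statements, I do not anticipate any genuine obstacle. The only point meriting attention is the bookkeeping of the existential quantifier over $n$ and $P$: the same $n$ and $P$ must serve all three conditions at once. This is automatic, however, since Theorems~\ref{Thm_Projective_Retraction} and~\ref{Thm:Main} and Lemma~\ref{Lem:SRandConvexSet} each refer to a single fixed polyhedron, so the equivalences may be composed without introducing distinct witnesses.
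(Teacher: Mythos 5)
Your proposal is correct and follows exactly the paper's own argument: the paper obtains this theorem precisely by combining Theorem~\ref{Thm_Projective_Retraction}, Theorem~\ref{Thm:Main}, and Lemma~\ref{Lem:SRandConvexSet} in the same chain of biconditionals. Your remark about keeping the same witnesses $n$ and $P$ across the chain is also the right (and only) point of care, and it is handled as you say.
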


If $S\subseteq \R^n$ is an $n$-simplex, 
then $S$ automatically satisfies condition (iii) in Corollary~\ref{cor:last}.

\begin{corollary}
Let $P$ be a rational  polyhedron $P\subseteq \cube$ such that 
\begin{itemize}
\item[(i)] $P$ is contractible,

\item[(ii)] $P\cap\{0,1\}^n\neq\emptyset$, and

\item[(iii)] $P=S_1,\cup\cdots\cup S_k$ for suitable $n$-simplexes $S_1,\ldots, S_k\subseteq \cube$.
\end{itemize}
Then $\McN(P)$ 
is a finitely generated projective unital $\ell$-group.
\end{corollary}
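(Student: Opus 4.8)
The plan is to apply Theorem~\ref{cor:last}: since hypotheses (i) and (ii) of the corollary coincide verbatim with conditions (i) and (ii) of that theorem, and $P$ is a rational polyhedron by assumption, it suffices to verify condition (iii) of Theorem~\ref{cor:last}, namely that for each rational point $v\in P$ there exist $w\in\Zed^n$ and $\varepsilon>0$ with $\conv(v,v+\varepsilon(w-v))\subseteq P$.

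First I would establish the remark preceding the corollary, that every $n$-simplex $S\subseteq\R^n$ satisfies this property. If $v\in S\cap\Zed^n$, the choice $w=v$ makes $\conv(v,v+\varepsilon(w-v))=\{v\}\subseteq S$ for every $\varepsilon>0$. Otherwise I would pass to the tangent cone $C=\{d\in\R^n\mid v+\varepsilon d\in S\mbox{ for some }\varepsilon>0\}$ of $S$ at $v$. Because $S$ is an $n$-simplex it is full-dimensional, so $C$ is a full-dimensional convex cone and its interior $\mathrm{int}(C)$ is a nonempty open cone with apex the origin.

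The heart of the argument is then to locate an integer point in the right direction. The translated open cone $v+\mathrm{int}(C)$ is full-dimensional, so it contains, arbitrarily far from its apex $v$, balls of radius exceeding $\sqrt{n}/2$; since any ball of such radius contains a point of $\Zed^n$ (the covering radius of $\Zed^n$ being $\sqrt{n}/2$), we obtain $w\in\Zed^n$ with $w-v\in\mathrm{int}(C)$. Then $v+\varepsilon(w-v)\in S$ for all sufficiently small $\varepsilon>0$, whence $\conv(v,v+\varepsilon(w-v))\subseteq S$ by convexity. This is the only step requiring genuine geometric input; everything else is bookkeeping.

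Finally I would combine this with hypothesis (iii) of the corollary. Writing $P=S_1\cup\cdots\cup S_k$ with each $S_i$ an $n$-simplex, any rational point $v\in P$ lies in some $S_i$; applying the remark to $S_i$ yields $w\in\Zed^n$ and $\varepsilon>0$ with $\conv(v,v+\varepsilon(w-v))\subseteq S_i\subseteq P$. Hence $P$ satisfies condition (iii) of Theorem~\ref{cor:last}, and together with (i) and (ii) this theorem gives that $\McN(P)$ is a finitely generated projective unital $\ell$-group.
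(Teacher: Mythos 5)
Your proposal is correct and follows exactly the route the paper intends: the corollary is obtained by applying Theorem~\ref{cor:last}, with condition (iii) verified via the remark that every $n$-simplex in $\R^n$ automatically satisfies it. The paper states that remark without proof, and your tangent-cone/covering-radius argument is a valid way to fill in that detail, so your write-up is a faithful (indeed slightly more complete) version of the paper's own proof.
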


\subsection*{Acknowledgements}
I would like to thank Daniele Mundici  for his numerous comments and corrections. 
I also wish to thank Hilary Priestley for her careful reading of a previous draft of this paper and her valuable suggestions for improvement.

\end{document}